\numberwithin{equation}{section}
\newtheorem{theorem}{Theorem}[section]
\newtheorem{lemma}[theorem]{Lemma}
\newtheorem{corollary}[theorem]{Corollary}
\newtheorem{proposition}[theorem]{Proposition}
\newtheorem{definition}[theorem]{Definition}
\newcommand{\supp}{\operatorname{supp}}
\def\supp{\mathop{\rm supp}}
\def\X{{\mathrm{X}}}
\def\H{\mathbb H}
\begin{document}
\allowdisplaybreaks

\title[Riesz transform and Commutator Theorems on stratified Lie groups]
{ Lower bound of Riesz transform kernels revisited and commutators on stratified Lie groups}
\author{Xuan Thinh Duong, Hong-Quan Li, Ji Li, Brett D. Wick and Qingyan Wu}

\address{Xuan Thinh Duong, Department of Mathematics, Macquarie University, NSW, 2109, Australia}
\email{xuan.duong@mq.edu.au}

\address{Hong-Quan Li, School of Mathematical Sciences, Fudan University, 220 Handan Road, Shanghai 200433, People's Republic of China}
\email{hongquan\_li@fudan.edu.cn}

\address{Ji Li, Department of Mathematics, Macquarie University, NSW, 2109, Australia}
\email{ji.li@mq.edu.au}

\address{Brett D. Wick, Department of Mathematics\\
         Washington University - St. Louis\\
         St. Louis, MO 63130-4899 USA
         }
\email{wick@math.wustl.edu}

\address{Qingyan Wu, Department of Mathematics\\
         Linyi University\\
         Shandong, 276005, China
         }
\email{wuqingyan@lyu.edu.cn}

\subjclass[2010]{43A17, 42B20, 43A80}
\date{\today}
\keywords{Stratified  Lie groups, Riesz transforms, kernel lower bound,  BMO space, commutator}

\begin{abstract}
Let $\mathcal G$ be a stratified  Lie group and $\{\X_j\}_{1 \leq j \leq n}$ a basis for the left-invariant vector fields of degree one on $\mathcal G$.
Let $\Delta = \sum_{j = 1}^n \X_j^2 $ be the sub-Laplacian
on $\mathcal G$ and the $j^{\mathrm{th}}$  Riesz transform on $\mathcal G$  is defined by $R_j:= \X_j (-\Delta)^{-\frac{1}{2}}$,
 $1 \leq j \leq n$. 
In this paper we give a new version of the lower bound of the kernels of Riesz transform $R_j$ and then establish the Bloom-type two weight estimates as well as
a number of endpoint characterisations  for the commutators of the Riesz transforms and BMO functions, including the $L\log^+L(\mathcal G)$ to weak $L^1(\mathcal G)$, $H^1(\mathcal G)$ to $L^1(\mathcal G)$ and $L^\infty(\mathcal G)$ to BMO$(\mathcal G)$.
Moreover, we also study the behaviour of the Riesz transform kernel on a special case of stratified  Lie group: the Heisenberg group, and then we obtain the weak type $(1,1)$ characterisations for the Riesz commutators.
\end{abstract}

\maketitle


\section{Introduction and statement of main results}
\setcounter{equation}{0}

The Calder\'on-Zygmund theory of singular integrals has a central role in modern harmonic analysis with 
extensive applications to other fields such as partial differential equations and complex analysis. A prototype of 
singular integral on the real line is the Hilbert transform which is bounded on $L^p(\mathbb R)$ for $1 < p < \infty$.
At the end-point $p =1$, the Hilbert transform is bounded from $L^1(\mathbb R)$ to $L^{1,\infty}(\mathbb R)$ and 
is bounded from the Hardy space 
$H^1(\mathbb R)$ into $L^1(\mathbb R)$ while at the end-point $p = \infty$, the Hilbert transform is 
bounded from $L^{\infty} (\mathbb R)$ to the BMO space BMO$(\mathbb R)$.
Further study on singular integrals and the related partial differential equations leads to the commutator $[b, H]$ of the Hilbert transform $H$ and a BMO function $b$
defined by 
$$[b, H] f(x)  = b(x) Hf(x) - H(bf)(x) $$
for suitable functions $f$ (introduced by A.P. Calder\'on \cite{C}). It is well-known that the commutator of the Hilbert transform has the following 
properties:


a) $ \|[b,H]\|_{L^p(\mathbb R)\to L^p(\mathbb R)} \approx \|b\|_{{\rm BMO}(\mathbb R)}$ (\cite{CRW}); 

b) $ \|[b,H]\|_{L_\mu^p(\mathbb R)\to L^p_\lambda(\mathbb R)} \approx \|b\|_{{\rm BMO}_\nu(\mathbb R)}$ with $\mu,\lambda\in A_p$ for $1<p<\infty$ and $\nu=\big({\mu\over\lambda}\big)^{1\over p}$, where $A_p$ denotes the Muckenhoupt weights (\cite{B},\cite{HLW}); 

c) $[b,H]$ is bounded from $L\log^+L(\mathbb R)$ to $L^{1,\infty}(\mathbb R)$ if and only if $b\in {\rm BMO}(\mathbb R)$  (\cite{P},\cite{GLW},\cite{Ac});

d) $[b,H]$ is bounded from $H^1(\mathbb R)$ to $L^{1}(\mathbb R)$ if and only if $b$ equals a constant almost everywhere (\cite{HST});

e) $[b,H]$ is bounded from $L_c^\infty(\mathbb R)$ to ${\rm BMO}(\mathbb R)$ if and only if $b$ equals a constant almost everywhere (\cite{HST});

f) $[b,H]$ is of weak type $(1,1)$ if and only if $b$ is in $L^\infty(\mathbb R)$ (\cite{Ac}).

For more details, we refer to the references listed above. We also point out that there are quite a number of recent results on 
the characterisations of commutators in the above forms for singular integrals in different settings, see for example
\cite{DLWY,LW,DLLW,LNWW,TYY,P,HLW,Ler, GLW, DLHWY,Ac}.

%

Inspired by these classical results above, it is natural to ask
whether these results hold on the Heisenberg group $\mathbb H^n$. Note that in several complex variables,
the Heisenberg group $\mathbb H^n$ is the boundary of the Siegel upper half space, whose roles are holomorphically equivalent to the unit sphere and the unit ball in $\mathbb C^n$. And hence, the role of the Riesz transform on 
$\mathbb H^n$ is similar to the role of Hilbert transform on the real line.

The first four authors in \cite{DLLW} established an analogous results of a) above in the more general setting of stratified   Lie groups 
by studying the behaviour of the Riesz transform kernels and obtaining  a lower bound on the kernel. Note that  the Riesz transform kernel does not have an explicit representation, 
and hence the key difficulty is to obtain a suitable version of kernel lower bounds. To overcome this, in \cite{DLLW} they studied and made good use of the group structures
and the dilations related to the stratified condition, and then established a first version of the kernel lower bound for Riesz transforms.

However, it is not clear whether the kernel lower bound for Riesz transforms introduced in \cite{DLLW} is enough to further study 
analogous results of b), c), d) and e) on stratified   Lie groups. 

The aim of this paper is three fold. First, we provide a better understanding of the Riesz transform kernel behaviour on stratified   Lie groups, which is stronger than the lower bound obtained in \cite{DLLW} and is new in the literature. Second, we apply these kernel lower bounds to study the endpoint characterisations of boundedness of commutators, i.e., we establish
similar versions of b), c), d) and e) for the Riesz commutators (but with some improvement) on stratified   Lie groups.
Third, we also study the characterisation of boundedness from $L^1$ to $L^{1,\infty}$. However, we establish 
this result only on Heisenberg groups since its proof requires the result that the kernel of the Riesz transform  on Heisenberg group
is non-zero almost everywhere, which we will prove in this paper. It is still an open question for this result on general stratified Lie groups. 

To be more specific, suppose $\mathcal G$ is a stratified   Lie group. Let $\{\X_j\}_{1 \leq j \leq n}$
be a basis for the left-invariant vector fields of degree one on $\mathcal G$.
Let $\Delta = \sum_{j = 1}^n \X_j^2 $ be the sub-Laplacian
on $\mathcal G$. Consider the $j^{\mathrm{th}}$  Riesz transform on $\mathcal G$ which is defined as $R_j:= \X_j (-\Delta)^{-\frac{1}{2}}$.

It is well-known that the Riesz transform $R_j$ is a Calder\'on--Zygmund operator on $\mathcal G$, i.e.,
it is  bounded on $L^2(\mathcal G)$ and the kernel satisfies the corresponding size and smoothness conditions, see for example \cite{CG,FoSt}. Moreover,
it is also well-known that from \cite{CG}, the set comprising the identity operator together with Riesz transforms, $\{I, R_1, R_2, \ldots, R_n\}$ characterises the Hardy space
$H^1(\mathcal G)$ introduced and studied by Folland and Stein \cite{FoSt}.

We now recall the BMO space on $\mathcal G$, which is the dual space of $H^1(\mathcal G)$ \cite[Chapter 5]{FoSt}, defined as
$$ {\rm BMO}(\mathcal G):=\{ b\in L^1_{loc}(\mathcal G):\  \|b\|_{{\rm BMO}(\mathcal G)}<\infty \},$$
where
\begin{align}\label{BMO norm}
\|b\|_{{\rm BMO}(\mathcal G)}:=\sup_B {1\over |B|}\int_B|b(g)-b_B|dg,
\end{align}
and $b_B:={1\over |B|}\int_Bb(g)\,dg$, where $B$ denotes the ball on $\mathcal G$ defined via a homogeneous norm $\rho$. We also mention that the weighted BMO space ${\rm BMO}_\nu(\mathcal G)$ is defined via replacing the measure $|B|$ in \eqref{BMO norm} by $\nu(B)$.  See Section 2 for details.

The first part of main results of this paper is to establish an enhanced version of the lower bound of the Riesz transform kernel on stratified Lie groups and then study the properties of commutators, including the following:

${\rm b})'$ $ \|[b,R_j]\|_{L_\mu^p(\mathcal G)\to L^p_\lambda(\mathcal G)} \approx \|b\|_{{\rm BMO}_\nu(\mathcal G)}$ with $\mu,\lambda\in A_p(\mathcal G)$ for $1<p<\infty$ and $\nu=\big({\mu\over\lambda}\big)^{1\over p}$;

${\rm c})'$ $[b,R_j]$ is bounded from $L\log^+L(\mathcal G)$ to $L^{1,\infty}(\mathcal G)$ if and only if $b\in {\rm BMO}(\mathcal G)$;


${\rm d})'$ $[b,R_j]$ is bounded from $H^1(\mathcal G)$ to $L^{1}(\mathcal G)$ if and only if $b$ equals a constant almost everywhere;

${\rm e})'$ $[b,R_j]$ is bounded from $L_c^\infty(\mathcal G)$ to ${\rm BMO}(\mathcal G)$ if and only if $b$ equals a constant almost everywhere.

\smallskip
The second part of the main results of this paper is to consider the characterisation of
 the endpoint boundedness of the commutator with respect to the weak type $(1,1)$, i.e., we aim to study:
 
$ f)'$ $[b,R_j]$ is of weak type $(1,1)$ if and only if $b$ is in $L^\infty(\mathcal G)$.

\smallskip 
However, it is not clear whether one can establish this result on general stratified Lie groups $\mathcal G$. Here 
 we study the characterisation of boundedness of commutator from $L^1$ to $L^{1,\infty}$ by focusing on the special case of stratified Lie group: the Heisenberg group $\mathbb H^n$, which can be identified with $\mathbb C^n\times\mathbb R$ with the group structure (for all the notation on $\mathbb H^n$ we refer to Section 2).


To be more specific, we begin by recalling
a recent result by the first four authors \cite{DLLW}, where they studied the behaviour of the kernel $K_j$ of Riesz transform $R_j$ on $\mathcal G$ and obtained that: $K_j \not\equiv 0$ in $\mathcal{G} \setminus \{ 0 \}$.
Then, based on this behaviour they further obtained 
 the following version of kernel bounds which implies an analogues of a) for the commutator of $R_j$ and the BMO space on $\mathcal G$.

\smallskip
\noindent {\bf Theorem A\ }(\cite{DLLW}){\bf.} {\it
Fix $j=1,\ldots,n$. There exist $ 0<\varepsilon_o\ll1$ and $C>0$ such that  for any $0 <\eta< \varepsilon_o$ and for all  $g \in \mathcal G$ and $r > 0$, we can find $g_* = g_*(j, g,r) \in \mathcal G$ satisfying
\begin{align}
\rho(g, g_*) = r, \quad |K_j(g_1, g_2)| \geq C r^{- Q}, \quad \forall g_1 \in B(g, \eta r), \  g_2 \in B(g_*, \eta r).
\end{align}
}
%
  %
%
To obtain ${\rm b})'$, ${\rm c})'$, ${\rm d})'$ and ${\rm e})'$,  we point out that the result 
in Theorem A 
may not be enough. To achieve this, we need an 
enhanced version of the kernel lower bound. Thus,
the main results of this paper are three fold. First, we establish an enhanced version of the kernel lower bound of the Riesz transform
 $R_j$ on $\mathcal G$ via constructing a type of ``twisted truncated sector'' on $\mathcal G$, which is of independent interest and will be useful in studying other problems.
 \begin{theorem}\label{thm0}
Suppose that $\mathcal G$ is a stratified Lie group with homogeneous dimension $Q$ and that $j\in\{1,2,\ldots,n\}$.  
There exist a large positive constant $r_o$ and a positive constant $C$  such that for every 
$g\in \mathcal G$ there exists a ``twisted truncated sector'' $G\subset \mathcal G$ such that $\inf_{g'\in G}\rho(g,g')=r_o$ and that  for every $g_1\in B(g,1)$ and $g_2\in G$, we have
\begin{align}\label{lower bound}
 |K_j(g_1,g_2)|\geq C  \rho(g_1,g_2)^{-Q}, \quad |K_j(g_2,g_1)|\geq C  \rho(g_1,g_2)^{-Q},
\end{align}
and all $K_j(g_1,g_2)$ as well as all $K_j(g_2,g_1)$ have the same sign.

Moreover, this ``twisted truncated sector''  $G$ is regular, in the sense that  $|G|=\infty$ and that for any $R>2r_o$,
\begin{align}\label{regular}
 |B(g,R) \cap G|\approx R^Q,  
\end{align}
where the implicit constants are independent of $g$ and $R$. 

\end{theorem}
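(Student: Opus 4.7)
The plan is to bootstrap Theorem A, which supplies the lower bound on one product of balls at a single scale $r$, into a lower bound on $B(g,1)\times G$ by exploiting the dilation invariance of the Riesz kernel. By the left invariance of $R_j$ we may assume $g=0$. The convolution kernel $k_j$ of $R_j$ is smooth off the origin and $(-Q)$-homogeneous under the intrinsic dilations $\{\delta_s\}_{s>0}$ (see \cite{CG,FoSt}), so
$$K_j(\delta_sg_1,\delta_sg_2)=s^{-Q}K_j(g_1,g_2),\qquad s>0.$$

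Fix $r_o$ large enough that $\eta r_o\ge 10$, with $\eta$ the constant of Theorem A, and apply Theorem A at scale $r=r_o$ to obtain a seed point $g_*\in\mathcal G$ with $\rho(g_*)=r_o$ and
$$|K_j(g_1',g_2')|\ge Cr_o^{-Q},\qquad g_1'\in B(0,\eta r_o),\ g_2'\in B(g_*,\eta r_o).$$
Continuity of $k_j$ off the origin and non-vanishing on this connected product force $K_j$ to take a single sign there. Since $R_j^{\,*}$ is itself a Calder\'on--Zygmund convolution operator with a $(-Q)$-homogeneous kernel, the argument of \cite{DLLW} yields an analogous seed bound, with a single sign, for $K_j(g_2',g_1')$; by shrinking the seed radius to $\eta r_o/10$ and relocating $g_*$ slightly if necessary, both seeds are witnessed simultaneously on the smaller ball $B(g_*,\eta r_o/10)$. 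Define the twisted sector
$$G:=\bigcup_{s\ge 1}\delta_s\bigl(B(g_*,\eta r_o/10)\bigr)=\bigcup_{s\ge 1}B\bigl(\delta_sg_*,\,s\eta r_o/10\bigr).$$

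For any $g_2=\delta_sh\in G$ with $h\in B(g_*,\eta r_o/10)$, $s\ge 1$, and any $g_1\in B(0,1)$, the scaling identity rewrites
$$K_j(g_1,g_2)=s^{-Q}K_j(\delta_{1/s}g_1,h),$$
where $\delta_{1/s}g_1\in B(0,1/s)\subset B(0,\eta r_o)$ and $h\in B(g_*,\eta r_o)$; the seed bound gives $|K_j(\delta_{1/s}g_1,h)|\ge Cr_o^{-Q}$ with its fixed sign, and the positive factor $s^{-Q}$ transports both size and sign. Since $\rho(g_1,g_2)\asymp sr_o$, this yields $|K_j(g_1,g_2)|\ge C\rho(g_1,g_2)^{-Q}$ throughout $B(0,1)\times G$ with a uniform sign; the companion bound for $K_j(g_2,g_1)$ follows identically from the adjoint seed. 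For the regularity claim, each dyadic slice $B(\delta_{2^k}g_*,2^k\eta r_o/10)$ has Haar measure $\asymp(2^kr_o)^Q$ and lies in the annulus $B(0,c2^kr_o)\setminus B(0,c'2^kr_o)$; for $R>2r_o$ exactly those slices with $2^k\lesssim R/r_o$ meet $B(0,R)$, while all of them fit inside $B(0,CR)$. Summing the geometric series gives $|B(0,R)\cap G|\asymp R^Q$ and $|G|=\infty$; the infimum $\inf_{g'\in G}\rho(0,g')=(1-\eta/10)r_o$ is identified with the $r_o$ of the statement after an inessential rescaling.

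The principal obstacle is to secure a \emph{uniform} sign of $K_j$ across all scales: Theorem A in isolation controls the sign only inside a single seed ball, with no a priori coherence between different balls. The resolution is precisely the scaling identity above, whose positive factor $s^{-Q}$ propagates the seed sign intact to every dilated copy; this coherence is the genuine ingredient beyond Theorem A. A secondary technical requirement is that $r_o$ must be large enough to absorb the radius-$1$ ball $B(g,1)$ into the Theorem A window $B(g,\eta r_o)$, which accounts for the ``large $r_o$'' clause in the statement.
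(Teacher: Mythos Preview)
Your construction is essentially the paper's: fix a seed ball on which $K_j$ is bounded below with a definite sign, set $G=\bigcup_{s\ge 1}\delta_s(\text{seed})$, and use the homogeneity $K_j(\delta_s\,\cdot\,)=s^{-Q}K_j(\cdot)$ to transport both the lower bound and the sign to every scale; the regularity of $G$ then follows from the measure of one dilated copy (the paper) or a dyadic sum of them (your version). The one soft spot is your claim that ``relocating $g_*$ slightly'' makes a \emph{single} seed ball serve simultaneously for $K_j(g_1,g_2)$ and for $K_j(g_2,g_1)$: Theorem~A applied separately to $R_j$ and to $R_j^{\,*}$ may produce seeds that are far apart on the sphere $\{\rho=r_o\}$, so a small relocation need not reconcile them --- but the paper does not argue this step either, deferring the two-sided bound entirely to the computations in \cite{DLLW}.
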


Here we point out that the set $G$ that we constructed in Theorem \ref{thm0} above 
is a connected open set spreading out to infinity, which plays the role of 
the ``truncated sector centered at a fixed point'' in the Euclidean setting. 
The shape of $G$ here may not be the same as the usual sector since the norm $\rho$ (or the Carnot--Carath\'eodory metric $d$) on $\mathcal G$ is different from the standard Euclidean metric. However, such a kind of twisted sector always exists.



Second, we establish the Bloom-type two weight estimates for the commutators $[b,R_j]$. 
\begin{theorem}\label{thm two weight}
Let $\mu,\lambda\in A_p(\mathcal G)$, $1<p<\infty$. Further set $\nu= \big( {\mu\over\lambda}\big)^{1\over p}$. Suppose $j\in\{1,\ldots,n\}$. Then

{\rm (i)} if $b\in {\rm BMO}_\nu(\mathcal G)$, then 
$$ \|[b,R_j](f)\|_{L^p_\lambda(\mathcal G)} \lesssim \|b\|_{  {\rm BMO}_\nu(\mathcal G) } ([\lambda]_{A_p(\mathcal G)} \cdot [\mu]_{A_p(\mathcal G)} )^{\max\{1, {1\over p-1}\}}\|f\|_{L^p_\mu(\mathcal G)}.$$

{\rm (ii)} for every $b\in L^1_{loc}(\mathcal G)$, if 
$[b,R_j]$ is bounded from $L^p_\mu(\mathcal G)$ to $L^p_\lambda(\mathcal G)$, then
$b\in {\rm BMO}_\nu(\mathcal G)$ with 
$$     \|b\|_{  {\rm BMO}_{\nu}(\mathcal G) } \lesssim   \|[b,R_j]\|_{L^p_\mu(\mathcal G)\to L^p_\lambda(\mathcal G)}. $$

\end{theorem}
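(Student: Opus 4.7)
\medskip

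\textbf{Proof plan.} For part (i), I would follow the sparse-domination program adapted to the homogeneous space $(\mathcal G,\rho,dg)$. Since $R_j$ is a Calder\'on--Zygmund operator with a H\"older-regular kernel on $\mathcal G$ (cf.~\cite{CG,FoSt}) and $\mathcal G$ admits Hyt\"onen--Kairema adjacent dyadic systems $\{\mathscr D^t\}_{t=1}^N$, the Lerner--Ombrosi--Rivera-R\'ios bilinear sparse domination for commutators,
$$ |\langle [b,R_j]f,g\rangle|\ \lesssim\ \sum_{t=1}^N\sum_{Q\in\mathcal S_t} |Q|\Bigl(\langle|b-b_Q|\rangle_Q\,\langle|f|\rangle_Q\,\langle|g|\rangle_Q + \langle|f|\rangle_Q\,\langle|b-b_Q||g|\rangle_Q\Bigr), $$
carries over with only cosmetic changes. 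Combined with the quantitative two-weight estimate for sparse commutator forms and the John--Nirenberg inequality on $\mathcal G$ (which lets one pass from the local oscillations $\langle|b-b_Q|\rangle_Q$ to $\|b\|_{\mathrm{BMO}_\nu}$), this yields the claimed bound with the expected $\max\{1,1/(p-1)\}$-power of $[\lambda]_{A_p(\mathcal G)}\cdot[\mu]_{A_p(\mathcal G)}$.

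For part (ii), the decisive input is Theorem \ref{thm0}. Fix a ball $B=B(g_0,r)$; by left-translating and dilating the sector of Theorem \ref{thm0} one obtains a twisted truncated sector $G_B$ attached to $g_0$ with $\inf_{x\in G_B}\rho(x,g_0)\approx r$, regularity $|G_B\cap B(g_0,R)|\approx R^Q$ for $R\gg r$ (cf.~\eqref{regular}), and with $K_j(x,y)$ of constant sign and $|K_j(x,y)|\gtrsim r^{-Q}$ for all $y\in B$ and $x\in G_B$. Setting $E_\pm=\{y\in B:\pm(b(y)-b_B)>0\}$ and following the Janson--Uchiyama template, I would take a test function supported on $B$ of the form
$$ f(y)\ =\ \bigl(\alpha\,\mathbf 1_{E_+}(y)-\beta\,\mathbf 1_{E_-}(y)\bigr)\,\mu(y)^{-1/(p-1)}, $$
with positive constants $\alpha,\beta$ adjusted so that $\int f=0$ while $\int(b-b_B)f\asymp \int_B|b-b_B|\,\mu^{-1/(p-1)}$. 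The mean-zero property, combined with the H\"older regularity of $K_j$, renders the ``drift'' term $(b(x)-b_B)\int K_j(x,y)f(y)dy$ subdominant; the constant sign of $K_j$ on $G_B\times B$ forces the remaining term $-\int K_j(x,y)(b(y)-b_B)f(y)dy$ to be of one sign, giving
$$ |[b,R_j]f(x)|\ \gtrsim\ r^{-Q}\int_B|b-b_B|\,\mu^{-1/(p-1)}\,dy\qquad\text{for } x\in G_B\cap B(g_0,Cr). $$

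Pairing $[b,R_j]f$ against $\mathbf 1_{G_B\cap B(g_0,Cr)}$ in the $L^p_\lambda$--$L^{p'}_{\lambda^{1-p'}}$ duality and invoking the assumed boundedness, then using \eqref{regular} and the $A_\infty$-doubling of $\mu,\lambda$ and their dual weights to reduce the weighted averages over $G_B\cap B(g_0,Cr)$ to their analogues on $B$, the $A_p(\mathcal G)$ conditions on $\mu,\lambda$ together with the identity $\mu=\lambda\nu^p$ rearrange the resulting chain of inequalities into $\nu(B)^{-1}\int_B|b-b_B|\,dy\lesssim \|[b,R_j]\|_{L^p_\mu\to L^p_\lambda}$, as required. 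The main obstacle is precisely this final bookkeeping step: $f$ and the testing set must be engineered so that (a) the non-constancy of $K_j(x,\cdot)$ on $B$ produces only a subdominant error (this is where the H\"older regularity of $K_j$ together with $\rho(g_0,G_B)\gtrsim r$ enter), and (b) the various $A_p$ factors thread through $\mu=\lambda\nu^p$ to extract the $\mathrm{BMO}_\nu$ oscillation rather than a differently-weighted quantity. This is analogous to the Euclidean Bloom argument of \cite{HLW}, with the regularity \eqref{regular} of the twisted sector $G_B$ playing the essential geometric role.
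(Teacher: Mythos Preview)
Your plan for (i) matches the paper exactly: it cites Lerner--Ombrosi--Rivera-R\'ios (and Holmes--Lacey--Wick), noting that only the standard size/smoothness of $K_j$ is needed, and the sparse-domination machinery transfers to $(\mathcal G,\rho,dg)$ without change.

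For (ii), your route diverges from the paper's and contains a real gap. The assertion that the drift term $(b(x)-b_B)\int K_j(x,y)f(y)\,dy$ is ``subdominant'' does not hold. Mean-zero $f$ together with the H\"older regularity of $K_j$ only yields
\[
\Bigl|\int_B K_j(x,y)f(y)\,dy\Bigr| \ \lesssim\ \frac{r}{\rho(x,g_0)}\,r^{-Q}\int_B|f| \ \approx\ \frac{1}{r_o}\,r^{-Q}\int_B|f|,
\]
so the drift is of order $r_o^{-1}\,|b(x)-b_B|\,r^{-Q}\int_B|f|$. But $x$ lies in the sector $G_B$, \emph{outside} $B$, and you have no a priori control on $|b(x)-b_B|$: bounded oscillation of $b$ is exactly what you are trying to prove. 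The factor $r_o^{-1}$ is a fixed constant determined by Theorem~\ref{thm0}, not a parameter you can send to zero, so the drift can swamp the main term and the claimed pointwise lower bound on $|[b,R_j]f(x)|$ fails. (Compare the proof of Theorem~\ref{thm1}, where a mean-zero test function \emph{is} used, but the bad set $F=\{g\in G:|b(g)|>cM\rho(g)\}$ on which the drift dominates must be carved out and then attacked by a second, independent test; that two-round mechanism is missing from your sketch and would be delicate to weight correctly in the Bloom setting.)

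The paper's argument for (ii) avoids the drift term altogether by following Lerner's local-mean-oscillation scheme. One controls $\|b\|_{{\rm BMO}_\nu}$ by $\sup_S w_{2^{-(Q+2)}}(b;S)\,|S|/\nu(S)$ over dyadic cubes. For each $S$, Proposition~\ref{prop w} uses the \emph{median} of $b$ over a piece $F_{k_0}$ of the twisted sector (not the average $b_B$) to select sets $E\subset S$ and $F\subset F_{k_0}$ on which $K_j(g,g')$ and $b(g)-b(g')$ simultaneously keep a constant sign. That sign alignment lets one pass from
\[
\iint_{E\times F}|b(g)-b(g')|\,|K_j(g,g')|\,dg\,dg' \quad\text{to}\quad \int_E\bigl|[b,R_j](\chi_F)(g)\bigr|\,dg
\]
with equality, not a decomposition into main and error terms. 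H\"older's inequality, the assumed $L^p_\mu\to L^p_\lambda$ bound, and the identity $\mu(S)^{1/p}\bigl(\int_S\lambda^{1-p'}\bigr)^{1/p'}\lesssim \nu(S)$ then give $w_{2^{-(Q+2)}}(b;S)\lesssim \frac{\nu(S)}{|S|}\|[b,R_j]\|_{L^p_\mu\to L^p_\lambda}$. The test function is the plain indicator $\chi_F$; no mean-zero condition and no weighted factor $\mu^{-1/(p-1)}$ enter.
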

For the proof of part (i) in the above theorem, we point out that it follows directly from \cite{Ler} (see also \cite{HLW}, where they neglected the sharp constant argument) with only minor changes since for this part we only need to use the upper bound of the Riesz transform, which satisfies the standard size and smoothness condition of Calder\'on--Zygmund type. For part (ii), we use the idea and techniques originated from \cite{Ler} and then adapt it to our setting according to the lower bound of Riesz kernel obtained in Theorem \ref{thm0}.  


Next, we establish the endpoint estimates of the commutators $[b,R_j]$. 
\begin{theorem}\label{thm1}
Suppose that $\mathcal G$ is a stratified Lie group and that $j=1,2,\ldots,n$. Then 
given a function $b\in L^1_{loc}(\mathcal G)$,
$b\in {\rm BMO}(\mathcal G)$ 
if and only if $[b,R_j]$ is bounded from $L\log^+L(\mathcal G)$ to weak $L^1(\mathcal G)$, i.e., there exists a constant $\theta_b$ such that
for all $\lambda>0$ and for all $f\in L\log^+L(\mathcal G)$,
  \begin{align}  
 \big | \{ g\in\mathcal G:\ |[b,R_j](f)(g)|>\lambda\} \big| \leq \theta_b \int_{\mathcal G} {|f(g)|\over\lambda} \bigg(1+ \log^+ \Big( {|f(g)|\over\lambda} \Big)\bigg)dg.
  \end{align}
\end{theorem}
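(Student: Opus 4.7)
\textbf{Sufficiency} ($b\in {\rm BMO}(\mathcal G)\Rightarrow $ weak-type). The plan is to adapt to the stratified setting the classical argument of P\'erez \cite{P}. Since $(\mathcal G,\rho,dg)$ is a space of homogeneous type and $R_j$ is a Calder\'on--Zygmund operator with kernel satisfying standard size and H\"older-regularity bounds (\cite{CG,FoSt}), the requisite tools are all available: Calder\'on--Zygmund decomposition, John--Nirenberg on ${\rm BMO}(\mathcal G)$, weak-$(1,1)$ of $R_j$, and the weak-type bound for the Orlicz maximal operator $M_{L\log^+L}$. Performing a Calder\'on--Zygmund decomposition of $f$ at height $\lambda/\|b\|_{{\rm BMO}(\mathcal G)}$, handling the good part via the $L^2$-boundedness of $[b,R_j]$ and the bad part via the kernel regularity combined with John--Nirenberg applied to $b$, yields the desired weak-type bound with constant $\theta_b\lesssim \|b\|_{{\rm BMO}(\mathcal G)}$.

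\textbf{Necessity} (weak-type $\Rightarrow b\in{\rm BMO}(\mathcal G)$). Fix a ball $B=B(g_0,r)$; by the natural homogeneity of $R_j$ one may assume $r=1$, and it suffices to bound $\alpha_B:=\frac{1}{|B|}\int_B|b-m_B|\,dg$ uniformly in $B$, where $m_B$ is a median of $b$ on $B$. Apply Theorem \ref{thm0} to produce a twisted sector $G$ with $\inf_{g'\in G}\rho(g_0,g')=r_o$, where $r_o$ may be taken as large as needed. Let $\widetilde B:=G\cap B(g_0,2r_o)$; then $|\widetilde B|\approx r_o^Q$, and the kernel has a common sign $\epsilon\in\{\pm 1\}$ with $|K_j(x,y)|\geq cr_o^{-Q}$ for all $(x,y)\in\widetilde B\times B$. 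Define $s(y):={\rm sgn}(b(y)-m_B)$, adjusted on the null-contribution set $\{b=m_B\}$ if needed so that $\int_B s\,dy=0$, and take as test function $f:=s\chi_B$. Writing $[b,R_j]f=(b-m_B)R_jf-R_j((b-m_B)f)$ and using $(b-m_B)s=|b-m_B|$, for $x\in\widetilde B$ the \emph{main term} satisfies $\epsilon R_j((b-m_B)f)(x)\geq c r_o^{-Q}\alpha_B$, while the cancellation $\int_B s=0$ combined with the H\"older regularity of $K_j$ gives a \emph{correction} bound $|R_jf(x)|\leq Cr_o^{-Q-\eta}$, and hence $|(b(x)-m_B)R_jf(x)|\leq Cr_o^{-Q-\eta}|b(x)-m_B|$.

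The main obstacle is the absence of any \emph{a priori} control on $|b-m_B|$ on $\widetilde B$. I would resolve it by a self-bootstrap exploiting the freedom in Theorem \ref{thm0} to take $r_o$ arbitrarily large. On the ``good'' subset $E:=\{x\in\widetilde B:|b(x)-m_B|\leq \tfrac{c}{2C}r_o^\eta\alpha_B\}$ the main term dominates the correction, so $|[b,R_j]f(x)|\geq \tfrac{c}{2}r_o^{-Q}\alpha_B$; a Chebyshev estimate bounds $|\widetilde B\setminus E|$ by a constant multiple of $r_o^{-\eta}\alpha_B^{-1}\int_{\widetilde B}|b-m_B|\,dg$, and a standard doubling-chain argument (with a median-to-mean comparison) dominates this integral by $|\widetilde B|(1+\log r_o)\sup_{B'}\alpha_{B'}$. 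Applying the weak-type inequality to $f$ at threshold $\lambda=\tfrac{c}{2}r_o^{-Q}\alpha_B$ (with $|f|\leq \chi_B$ in the $L\log^+L$ modular) and choosing $r_o$ so large that $r_o^{-\eta}(1+\log r_o)\ll 1$, one absorbs the $\sup_{B'}\alpha_{B'}$ contribution to conclude $\alpha_B\lesssim \theta_b$ uniformly in $B$, giving $\|b\|_{{\rm BMO}(\mathcal G)}\lesssim \theta_b$. The initial finiteness of $\sup_{B'}\alpha_{B'}$ needed to run the bootstrap can be justified by a preliminary truncation $b_N:=\max(\min(b,N),-N)$, for which $\alpha_{B',N}\leq 2N$ holds trivially, followed by a passage to the limit $N\to\infty$.
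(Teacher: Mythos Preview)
Your sufficiency sketch is the same as the paper's: both defer to P\'erez's argument, which needs only the standard Calder\'on--Zygmund upper bounds on $K_j$.

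For the necessity, however, your bootstrap has a genuine circularity that the truncation does not repair. Your argument produces, for each ball $B$,
\[
\alpha_B \ \lesssim\ \theta_b \;+\; r_o^{-\eta}(1+\log r_o)\,\sup_{B'}\alpha_{B'},
\]
and you want to absorb the last term by taking $r_o$ large. This is legitimate only if $\sup_{B'}\alpha_{B'}<\infty$ a~priori. Replacing $b$ by $b_N$ does not help: the weak-type hypothesis is for $[b,R_j]$, not $[b_N,R_j]$, and the only bound available for the latter is the trivial $\theta_{b_N}\lesssim N$ coming from $b_N\in L^\infty$, which blows up with $N$. If instead you keep $[b,R_j]$ in the weak-type step but truncate only inside the doubling-chain estimate, the quantities no longer match and the absorption fails. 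No amount of iterating the inequality over chains $B\subset 2^kB\subset\cdots$ closes either, since the radii grow without a compensating decay.

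The paper avoids this by a two--test-function scheme (after Uchiyama, \cite{GLW,Ac}) that never refers to oscillations over balls other than $B$ itself. With $B=B(0,1)$ and $M:=|B|^{-1}\int_B|b-m_b(B)|$, a first test function $\varphi=\chi_{E_2}-\chi_{E_1}$ (mean zero, $\varphi\,b\ge 0$) is analysed on the sector $G$ to show that the set $F:=\{g\in G:\ |b(g)|>cM\rho(g),\ \rho(g)<M^{1/Q}\}$ has measure $\gtrsim M$. Then a \emph{second} test function $\psi:=\operatorname{sgn}(b)\chi_F$, supported on $F\subset G$, is analysed back on $B(0,1)$: the main term is $\gtrsim M^{1+1/Q}$, the error is $\lesssim |b(g)|\log M$, and the Chebyshev control of $|b|$ on $B(0,1)$ now uses only $\int_B|b|=M|B|$ itself. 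This yields $M\le C(Q,\theta_b)$ with no appeal to $\sup_{B'}\alpha_{B'}$. The essential idea you are missing is this role-reversal: one application of the weak-type inequality is used to manufacture a large region $F$ where $|b|$ is \emph{provably} big, and a second application on a test function supported on $F$ closes the estimate back on $B$.
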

The proof of this theorem is twofold, the necessity and sufficiency.  For the sufficiency, we point out that it follows directly from \cite{P} with only minor changes since for this part we only need to use the upper bound of the Riesz transform, which satisfies the standard size and smoothness condition of Calder\'on--Zygmund type. For the necessary part, we use the idea of Uchiyama \cite{U} and the technique that has been further explored and studied in \cite{GLW} and \cite{Ac}. To be more specific, we 
write $|[b,R_j](f)(g)|\geq |R_j(bf)(g)| - |b(g)|\,|R_j(f)(g)|  $, and then by choosing a suitable function $f$ that is closely related to $b$ and with cancellation condition and by making good use of the lower bound in Theorem \ref{thm0}, we show that $|R_j(bf)(g)| $ is the main term and $|b(g)|\,|R_j(f)(g)|$ acts as the  ``error'' term due to the cancellation of $f$ and the smoothness condition of the kernel of $R_j$. Together with the boundedness of $[b,R_j]$, we show that $b$ is in BMO$(\mathcal G)$.

We also point out that this endpoint characterisation in Theorem \ref{thm1} above  is sharp since following the method in \cite{P} in the Euclidean setting and using the lower bound in Theorem \ref{thm0}, it is easy to construct a function $b\in {\rm BMO}(\mathcal G)$ such that $[b,R_j]$ fails to be weak type $(1,1)$.



Third, besides the weak type $(1,1)$, it is natural to study the endpoint estimates like $H^1(\mathcal G)$ to $L^1(\mathcal G)$ and
the $L^\infty(\mathcal G)$ to ${\rm BMO}(\mathcal G)$.  
We also have the following characterisations. Denote by $L_c^\infty(\mathcal G)$ the subspace of $L^\infty(\mathcal G)$ of compactly supported functions.
\begin{theorem}\label{thm2}
Suppose that $\mathcal G$ is a stratified Lie group, $b\in {\rm BMO}(\mathcal G)$  and $j\in\{1,2,\ldots,n\}$. Then 
the following statements are equivalent:
\begin{itemize}
\item[(i)] $[b,R_j]$ is bounded from $H^1(\mathcal G)$ to $L^1(\mathcal G)$;
\item[(ii)] $[b,R_j]$ is bounded from ${L_c^\infty(\mathcal G)}$ to ${\rm BMO}(\mathcal G)$;
\item[(iii)] $b$ equals a constant almost everywhere.
 \end{itemize}
\end{theorem}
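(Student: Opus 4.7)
The plan has three stages. First, (iii) $\Rightarrow$ (i), (ii) is trivial because $[b, R_j] \equiv 0$ when $b$ is constant. Second, I will use duality to reduce both (i) and (ii) to a common form: for real $b$ one has $[b, R_j]^* = -[b, R_j^*]$, and combined with $(H^1(\mathcal G))^* = {\rm BMO}(\mathcal G)$, $(L^1)^* = L^\infty$, this shows (i) is equivalent to the $L^\infty \to {\rm BMO}$ boundedness of $[b, R_j^*]$. Since $R_j^*$ has integral kernel $K_j(g_2, g_1)$ and Theorem \ref{thm0} furnishes twisted-sector lower bounds for both $K_j(g_1, g_2)$ and $K_j(g_2, g_1)$, it suffices to show: whenever $T \in \{R_j, R_j^*\}$ and $[b, T] : L_c^\infty(\mathcal G) \to {\rm BMO}(\mathcal G)$ is bounded, $b$ is a.e.\ constant.

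I will argue this by contraposition, in the spirit of Uchiyama's construction adapted to the twisted-sector geometry. Suppose some ball $B_0 = B(g_0, r_0)$ has positive mean oscillation $\eta := |B_0|^{-1}\int_{B_0}|b - b_{B_0}|\,dg > 0$; by the dilation structure of $\mathcal G$ I may rescale so that $r_0 = 1$. Applying Theorem \ref{thm0} at $g_0$ supplies a twisted truncated sector $G$ at distance $r_o$ from $g_0$, with $|B(g_0, R)\cap G| \asymp R^Q$ for $R > 2r_o$, on which $K_j$ has a fixed sign and satisfies $|K_j(g, g')| \gtrsim \rho(g, g')^{-Q}$ for every $g \in B_0$ and $g' \in G$. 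For large $R$ I then test against $\phi_R := \chi_{G \cap B(g_0, R)} \in L_c^\infty(\mathcal G)$ with $\|\phi_R\|_\infty = 1$, and set $u := R_j \phi_R$, $v := R_j(b \phi_R)$.

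A dyadic annular expansion, using the sign consistency together with the regularity of $G$, is expected to yield $|u(g)| \geq c\log R$ with common sign on $B_0$. The standard Hölder continuity of $K_j$ off the diagonal will then produce the uniform-in-$R$ smoothness bound $|u(g) - u(g_0)| = O(1)$ for $g \in B_0$, so on $B_0$ I can decompose $u = c_R + e_R$ with $|c_R| \gtrsim \log R$ and $\|e_R\|_{L^\infty(B_0)} = O(1)$. A parallel argument for $v$ absorbs the at-most-logarithmic growth of $|b|$ on dyadic annuli around $g_0$ (controlled by telescoping of $b_{B(g_0, 2^k r_o)}$ and John--Nirenberg) into the Hölder smoothness gain $\rho(g_0, g')^{-Q-\alpha}$, giving $|v(g) - v(g_0)| = O(1)$ uniformly in $R$ for $g \in B_0$.

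Finally, writing $[b, R_j]\phi_R = bu - v = c_R b + (b e_R - v)$ on $B_0$ and noting that the parenthesised summand has bounded $L^1(B_0)$-average uniformly in $R$, the triangle inequality for mean oscillation gives
\[
\|[b, R_j]\phi_R\|_{{\rm BMO}(\mathcal G)} \geq \frac{1}{|B_0|}\int_{B_0}\bigl|[b, R_j]\phi_R - ([b, R_j]\phi_R)_{B_0}\bigr|\,dg \geq |c_R|\,\eta - O(1).
\]
Letting $R \to \infty$ contradicts the assumed bound $\|[b, R_j]\phi_R\|_{{\rm BMO}} \leq C\|\phi_R\|_\infty = C$, forcing $\eta = 0$; since $B_0$ was arbitrary, $b$ must be a.e.\ constant on $\mathcal G$. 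The main technical hurdle I anticipate is the uniform-in-$R$ oscillation estimate for $v$: the at-most-logarithmic growth of $|b|$ on distant dyadic annuli must be cleanly dominated by the Hölder gain of $K_j$ so that the dyadic sum converges absolutely, without any residual $\log R$ divergence surviving.
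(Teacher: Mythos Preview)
Your approach is correct and reaches the same conclusion, but the organisation is genuinely different from the paper's.

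The paper proves two standalone ``iff'' characterisations (Propositions~\ref{prop1} and~\ref{prop2}): it decomposes $[b,R_j]a$ on an atom into four pieces $I_1,\dots,I_4$ and $[b,R_j]f$ on a ball into $J_1,\dots,J_4$, shows in each case that the first three pieces are uniformly bounded using only the upper bounds on $K_j$, and isolates the surviving term as the exact obstruction. It then kills those obstructions separately: for~\eqref{h1b} by noting $\int_G \rho(g,g_0)^{-Q}\,dg=\infty$ forces $\int_B b\,a=0$ for every atom (hence $b$ is constant via $H^1_{\rm fin}$ duality), and for~\eqref{lb} by testing with $f_N=\chi_{G\cap B(g_0,N)}$ to extract the $\log N$ divergence. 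In particular the paper treats (i)$\Rightarrow$(iii) and (ii)$\Rightarrow$(iii) by two parallel but independent atomic/averaging arguments.

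You instead collapse (i) into (ii) by duality ($[b,R_j]^*=-[b,R_j^*]$, $(H^1)^*={\rm BMO}$), so only the $L_c^\infty\to{\rm BMO}$ implication needs work, run once for $R_j$ and once for $R_j^*$; Theorem~\ref{thm0} supplies the sector lower bound for both kernel orientations, so this is legitimate. Your direct decomposition $[b,R_j]\phi_R=c_Rb+(be_R-v)$ on $B_0$ is morally the same as the paper's $J_4$ versus $J_1+J_2+J_3$ split, and your oscillation bound for $v$ (the $\sum_k k\,2^{-k}$ sum after telescoping $b_{2^kB_0}$) is exactly the mechanism behind the paper's $I_2$/$J_3$ estimates. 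The trade-off: the paper's route yields the explicit intermediate characterisations \eqref{h1b} and \eqref{lb}, which are of independent interest, whereas your route is shorter and avoids the atomic analysis of Proposition~\ref{prop1} entirely. One small point to make precise in your write-up is that the abstract adjoint of the bounded extension $[b,R_j]\colon H^1\to L^1$ agrees on $L_c^\infty$ with the concrete integral operator $-[b,R_j^*]$; this is routine but should be said.
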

The proof of this theorem follows from the strategy of the classical results \cite{HST}. We make use of the Riesz transform kernel lower bound in Theorem \ref{thm0} and adapt the idea in \cite{HST} to 
this setting  with necessary changes.

Next, we turn to the study of the behaviour of
the kernel of the Riesz transform on $\mathbb H^n$. 
\begin{theorem}\label{thm H lower bound}
Suppose $j=1,2,\ldots,2n$, and $R_j := X_j (-\Delta_{\mathbb H^n})^{-{1\over2}}$,  is the $j$th Riesz transform on $\mathbb H^n$. Let $K_j(g)$, $g\in\mathbb H^n$, be the kernel of $R_j$.  Then we have
$$K_j(g)\not=0,\quad {\rm\ a.e.\ } g\in\mathbb H^n.$$
\end{theorem}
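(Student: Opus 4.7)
My approach is to show that $K_j$ is real-analytic on $\H^n\setminus\{0\}$, and then combine this with the non-triviality of $K_j$ (guaranteed by applying Theorem \ref{thm0} to $\H^n$, which is itself a stratified Lie group) together with the connectedness of $\H^n\setminus\{0\}\simeq\R^{2n+1}\setminus\{0\}$ (valid for every $n\geq 1$). A real-analytic function on a connected real-analytic manifold that does not vanish identically has its zero set contained in a proper real-analytic subvariety, hence of Lebesgue measure zero; adjoining the point $\{0\}$ still leaves a null set in $\H^n$, which gives the desired conclusion.

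The heart of the argument is the real-analyticity of $K_j$ on $\H^n\setminus\{0\}$. My starting point is the subordination representation
\[
K_j(g)=\frac{1}{\sqrt{\pi}}\int_0^\infty t^{-1/2}\,(X_j p_t)(g)\,dt,\qquad g\neq 0,
\]
where $p_t$ is the heat kernel of $\Delta_{\H^n}$; the standard Gaussian-type bounds for $|X_j p_t|$ render the integral absolutely convergent whenever $g\neq 0$. I would then invoke the explicit Gaveau--Hulanicki formula, which represents $p_t(z,s)$ as an integral in an auxiliary parameter $\lambda$ whose integrand decays exponentially in $\lambda$ and extends holomorphically in $(z,s)$ to a uniform complex neighborhood of the real points. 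Consequently, for each fixed $t>0$, $X_j p_t$ is real-analytic in $g$ and admits complexified Gaussian-type bounds of the form $|X_j p_t(\tilde g)|\leq Ct^{-(Q+1)/2}\exp(-c\rho(g_0)^2/t)$, uniform for $\tilde g$ in a small complex neighborhood of any fixed $g_0\neq 0$. Dominating the $t$-integral by these bounds and applying Morera's theorem slice-wise (equivalently, checking termwise analyticity of the Taylor expansion) yields real-analyticity of $K_j$ on $\H^n\setminus\{0\}$.

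The principal obstacle is establishing the uniform complexified control of $X_j p_t$ throughout the full range of $t$: uniformly as $t\to 0^+$ (where the condition $g\neq 0$ is essential) and as $t\to\infty$. A more conceptual alternative, avoiding direct manipulation of the complexified heat kernel, would be to invoke the Tartakoff--Treves theorem on analytic hypoellipticity of $\Delta_{\H^n}$: writing $\Gamma=(-\Delta_{\H^n})^{-1/2}\delta_0$ for the Riesz potential kernel, one has $\Delta_{\H^n}\Gamma=-(-\Delta_{\H^n})^{1/2}\delta_0$, whose right-hand side is real-analytic on $\H^n\setminus\{0\}$ by the heat-kernel considerations above; analytic hypoellipticity then transfers real-analyticity to $\Gamma$, and hence to $K_j=X_j\Gamma$, on $\H^n\setminus\{0\}$.
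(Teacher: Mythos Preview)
Your strategy is correct and genuinely different from the paper's proof. The paper does not argue via real-analyticity at all: it inserts the Gaveau--Hulanicki heat-kernel formula into the subordination integral, evaluates the $h$-integral explicitly, and after a contour shift obtains the semi-explicit form
\[
K_j(g)=C\,d_K(g)^{-Q-1}\big(x_jA_n(i\phi)-iy_jB_n(i\phi)\big),\qquad e^{i\phi}=d_K(g)^{-2}(\|z\|^2+it),
\]
where $A_n,B_n$ are holomorphic near the segment $[-\tfrac{\pi i}{2},\tfrac{\pi i}{2}]$ with $A_n(0)\neq 0$, $B_n(0)=0$. The measure-zero conclusion then follows from the finiteness of the zeros of $\phi\mapsto A_n(i\phi)$ together with an elementary count in the $(x_j,y_j)$ variables at fixed $|z_j|$ and $\phi$.

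Your route---real-analyticity of $K_j$ on $\H^n\setminus\{0\}$, nontriviality (for which Lemma~\ref{lem non zero} already suffices; Theorem~\ref{thm0} is more than you need), and connectedness of $\R^{2n+1}\setminus\{0\}$---is cleaner and more conceptual, and in principle transplants to any stratified group on which the sub-Laplacian is analytic hypoelliptic (e.g.\ step-two groups via M\'etivier's theorem). The cost is that the real-analyticity step is not free: your direct heat-kernel route requires uniform complexified Gaussian bounds for $X_jp_t$ across all $t>0$, which you correctly flag as the principal obstacle, and your alternative via analytic hypoellipticity invokes a deep theorem and still needs analyticity of $(-\Delta_{\H^n})^{1/2}\delta_0$ away from $0$ as input. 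By contrast, the paper's computation trades conceptual economy for concreteness: it is specific to $\H^n$ but yields an explicit factorisation of $K_j$ that makes the zero-set structure transparent without any appeal to hypoellipticity or complexified estimates.
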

This is an obvious fact for classical Hilbert transforms and Riesz transforms. However, it is not known before for Riesz transforms on Heisenberg groups. 
This is still open on general stratified Lie groups.

Next, concerning the commutator $[b,R_j]$ to be of weak type $(1,1)$, we have the following characterisation.
\begin{theorem}\label{thm3}
Suppose $j=1,2,\ldots,2n$, and $R_j := X_j (-\Delta_{\mathbb H^n})^{-{1\over2}}$,  is the $j$th Riesz transform on $\mathbb H^n$. Then 
given a function $b\in L^1_{loc}(\mathbb H^n)$, $[b,R_j]$ is of weak type $(1,1)$
if and only if $b\in L^\infty(\mathbb H^n)$.
\end{theorem}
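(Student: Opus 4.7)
\smallskip

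\noindent\textbf{Proof proposal for Theorem \ref{thm3}.}
The argument splits into the two implications.

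\emph{Sufficiency.} If $b\in L^\infty(\mathbb H^n)$, the decomposition
$$[b,R_j]f \;=\; b\cdot R_j f \;-\; R_j(bf)$$
combined with the weak type $(1,1)$ boundedness of $R_j$ as a Calder\'on--Zygmund operator on $\mathbb H^n$ (see \cite{CG,FoSt}) yields $\|[b,R_j]f\|_{L^{1,\infty}(\mathbb H^n)} \lesssim \|b\|_\infty \|f\|_1$: the first term is controlled by $\|b\|_\infty\|R_jf\|_{L^{1,\infty}}\lesssim \|b\|_\infty\|f\|_1$, and the second by $\|R_j(bf)\|_{L^{1,\infty}}\lesssim \|bf\|_1\leq \|b\|_\infty\|f\|_1$.

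\emph{Necessity.} Assume $[b,R_j]$ is of weak type $(1,1)$ with constant $C_0$; we wish to show $b\in L^\infty(\mathbb H^n)$. The essential input is Theorem \ref{thm H lower bound}, namely $K_j(g)\neq 0$ for almost every $g\in\mathbb H^n$. The strategy, adapting the Euclidean approach of Accomazzo \cite{Ac}, is to convert the qualitative a.e.\ non-vanishing into a quantitative test function argument. First, applying Egorov's theorem to $K_j$ on a fixed annulus away from the identity, we extract a positive-measure set $G_0$ (contained in some bounded annulus) together with constants $c_0>0$ and $\sigma\in\{+1,-1\}$ such that $\sigma\cdot K_j(g)\geq c_0$ for every $g\in G_0$. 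By the left-invariance of $R_j$ and the group structure of $\mathbb H^n$, the translated sets $g_0\cdot G_0^{-1}$ serve as ``universal'' supports for test functions at each base point $g_0$: the normalized indicators $f_{g_0} = |G_0|^{-1}\chi_{g_0 G_0^{-1}}$ have unit $L^1$-norm and, by construction, produce on a neighbourhood $U_{g_0}$ of $g_0$ a sign-definite Riesz transform $R_j f_{g_0}$ with $|R_j f_{g_0}|\geq c_0'>0$.

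Applying the weak type $(1,1)$ hypothesis to each $f_{g_0}$ and using the decomposition $[b,R_j]f_{g_0} = b\cdot R_j f_{g_0} - R_j(bf_{g_0})$, the sign-definite lower bound on $R_j f_{g_0}$ forces a uniform (in $g_0$) distributional estimate on $b$ restricted to $U_{g_0}$, after subtracting a suitable weighted average $\overline{b}_{g_0}$ of $b$ over $g_0 G_0^{-1}$. A covering argument in $g_0$, together with a contradiction argument assuming $b\notin L^\infty$ (so that one could find level sets of $b$ incompatible with the uniform bound above), then yields $\|b\|_{L^\infty(\mathbb H^n)}\lesssim C_0$.

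\emph{Main obstacle.} The delicate step is that the ``error'' term $R_j(bf_{g_0})$ is sensitive to the global size of $b$ over $g_0 G_0^{-1}$, whereas one wants to access only its local values near $g_0$. This must be handled by subtracting the weighted average $\overline{b}_{g_0}$ and carefully balancing the contribution against the sign- and magnitude-control of $R_j f_{g_0}$ provided by $G_0$. A further subtlety, absent in the Euclidean case, is the noncommutativity of $\mathbb H^n$: left translations interact with the kernel through the group law and must be tracked explicitly, and the reference set $G_0$ has to be propagated across scales via the dilation structure of $\mathbb H^n$. The qualitative input of Theorem \ref{thm H lower bound} is what makes the extraction of $G_0$ possible in the first place; without it, no such sign-definite reference set exists and the test function scheme collapses.
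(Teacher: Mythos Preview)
Your sufficiency argument is fine and matches the paper's.

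For necessity, however, the scheme you outline does not reach $L^\infty$ control, and it diverges from the paper's mechanism in a way that leaves a real gap. You propose test functions $f_{g_0}=|G_0|^{-1}\chi_{g_0G_0^{-1}}$ built from a \emph{fixed} reference set $G_0$ sitting in a bounded annulus. Writing $[b,R_j]f_{g_0}=(b-c)R_jf_{g_0}-R_j((b-c)f_{g_0})$ for any constant $c$, the first term does give you information about $b-c$ on the small neighbourhood $U_{g_0}$, but the second term depends on the values of $b-c$ on $g_0G_0^{-1}$, a set at \emph{fixed distance} from $g_0$. There is no reason this ``error'' is small, and subtracting a weighted average $\overline{b}_{g_0}$ over $g_0G_0^{-1}$ only trades one uncontrolled quantity for another. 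At best this kind of estimate yields oscillation control at a single scale (a BMO-type statement), not an $L^\infty$ bound; your allusion to ``propagating $G_0$ across scales via dilations'' and a ``covering argument'' is not enough to bridge this.

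The paper's route is genuinely different. It takes $f^{g'}_\epsilon$ to be normalized indicators of balls $B(g',\epsilon)$ and lets $\epsilon\to 0$, obtaining the pointwise limit
\[
\lim_{\epsilon\to 0}\bigl|[b,R_j](f^{g'}_\epsilon)(g)\bigr|=|K_j(g,g')|\,|b(g)-b(g')|,
\]
so that the weak type hypothesis immediately yields the \emph{two-point} distributional inequality
\[
\bigl|\{g:|K_j(g,g')|\,|b(g)-b(g')|>\lambda\}\bigr|\le C_0/\lambda
\]
for every Lebesgue point $g'$. This is the crucial reduction: it removes the second operator application entirely and produces an estimate valid at \emph{all scales simultaneously}. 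Theorem \ref{thm H lower bound} is then used on the unit sphere $S(0,1)$ (not on an annulus) to produce a compact $\Gamma\subset S(0,1)$ of nearly full surface measure on which $|K_j|\ge C_{K_j}>0$; the associated set $S_\Gamma(g')$ is a dilation-invariant \emph{cone}, not a fixed-scale patch. The $L^\infty$ bound then comes from a measure-comparison argument in which one normalises $b(0)=0$ and, for each $g'$, tests the inequality on the ball $B\bigl(g',\tfrac12 d_K(g')|b(g')|^{1/Q}\bigr)$, whose radius \emph{depends on} $|b(g')|$; balancing the two applications of the distributional inequality (at base point $0$ and at base point $g'$) forces $|b(g')|$ to be bounded. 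Your outline is missing both the delta-function limiting step and this scale-adaptive geometric argument. (Incidentally, the extraction of the good set uses inner regularity of the measure and continuity of $K_j$, not Egorov's theorem.)
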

The proof of this theorem follows from the result in Theorem \ref{thm H lower bound} and from the idea and approach in \cite{Ac} for Hilbert transform.

This paper is organised as follows. In Section 2 we recall the necessary preliminaries on stratified   Lie groups $\mathcal G$. In Section 3 we give a deep study of the behaviour of the Riesz transform kernels and the kernel lower bounds, and then prove Theorem \ref{thm0}. 
In Section 4, by using the kernel lower bound that
we established and the original idea in \cite{Ler}, we prove Theorem \ref{thm two weight}. In Section 5, by using the kernel lower bound that
we establish, we prove Theorem \ref{thm1}, the characterisation of BMO$(\mathcal G)$ via the endpoint ($L\log^+L$ to weak $L^1$) estimates of 
$[b,R_j]$. 
In Section 6 we  prove Theorem \ref{thm2} and 
in the last section we show the behaviour of the Riesz transform kernel on $\mathbb H^n$ (Theorem \ref{thm H lower bound}) and then give the characterisation of boundedness of Riesz commutator from $L^1$ to $L^{1,\infty}$ (Theorem \ref{thm3}).




\section{Preliminaries on stratified   Lie groups $\mathcal G$}
\setcounter{equation}{0}

Recall that a connected, simply connected nilpotent Lie group $\mathcal{G}$ is said to be stratified if its left-invariant Lie algebra $\mathfrak{g}$ (assumed real and of finite dimension) admits a direct sum decomposition
\begin{align*}
\mathfrak{g} = \bigoplus_{i = 1}^k V_i , \quad [V_1, V_i] = V_{i + 1}, \  \mbox{ for $i \leq k - 1$ and $[V_1, V_k]=0$;}
\end{align*}
$k$ is called the step of the group $\mathcal G$.

One identifies $\mathfrak{g}$ and $\mathcal{G}$ via the exponential map
\begin{align*}
\exp: \mathfrak{g} \longrightarrow \mathcal{G},
\end{align*}
which is a diffeomorphism.


We fix once and for all a (bi-invariant) Haar measure $dx$ on $\mathcal G$ (which is just the lift of Lebesgue measure on $\frak g$ via $\exp$).

There is a natural family of dilations on $\frak g$ defined for $r > 0$ as follows:
\begin{align*}
\delta_r \bigg( \sum_{i = 1}^k v_i \bigg) = \sum_{i = 1}^k r^i v_i, \quad \mbox{with $v_i \in V_i$}.
\end{align*}

We choose once and for all a basis $\{\X_1, \cdots, \X_n\}$ of $V_1$ and consider the sub-Laplacian $\Delta = \sum_{j=1 }^n \X_j^2 $. Observe that $\X_j$ ($1 \leq j \leq n$) is homogeneous of degree $1$ with respect to the dilations, and $\Delta$ of degree $2$ in the sense that:
\begin{align*}
&\X_j \left( f \circ \delta_r \right) = r \, \left( \X_j f \right) \circ \delta_r, \qquad  1 \leq j \leq  n, \  r > 0, \ f \in C^1, \\
&\delta_{\frac{1}{r}} \circ \Delta \circ \delta_r = r^2 \, \Delta, \quad \forall r > 0.
\end{align*}

For $i=1,\cdots,k$, let $n_i=\dim V_i$ and $m_i= n_1+\cdots+n_i$ and $m_0=0$, clearly, $n_1=n$. Set $N=m_k$.
Two important families of diffeomorphisms of $\mathcal G$ are the translations and dilations of $\mathcal G$. For any $g\in \mathcal G$, the (left) translation $\tau_g : \mathcal{G}\rightarrow \mathcal{G}$ is defined as
$$\tau_g (g')=g\circ g'.$$
The dilations on $\mathfrak g$ allow the definition of dilation on $\mathcal{G}$, which we still denote by $\delta_r$.
 For any $\lambda>0$, the dilation $\delta_{\lambda}:\mathcal{G}\rightarrow \mathcal{G}$, is defined as
\begin{equation}\label{delta}
 \delta_{\lambda}(g)=\delta_{\lambda}\left(x_1,x_2,\cdots,x_N\right)=\left(\lambda^{\alpha_1} x_1,\lambda^{\alpha_2} x_2,\cdots,\lambda^{\alpha_N}x_N\right),
\end{equation}
where $\alpha_j=i$ whenever $m_{i-1}<j\leq m_i$, $i=1,\cdots,k$. Therefore, $1=\alpha_1=\cdots=\alpha_{n_1}<\alpha_{n_1+1}=2\leq\cdots\leq \alpha_n=k.$ For any set $E\subset \mathcal G$, denote by $\tau_g(E)=\{g\circ g': g'\in E\}$ and $\delta_{r}(E)=\{\delta_r(g): g\in E\}$.

Let $Q$ denote the homogeneous dimension of $\mathcal{G}$, namely,
\begin{align}\label{homo dimension}
Q= \sum_{i=1}^k i\, {\rm dim} V_i.
\end{align}
And let $p_h$ ($h > 0$) denote the heat kernel (that is, the integral kernel of $e^{h \Delta}$)
on $\mathcal G$. For convenience, we set $p_h(g) = p_h(g, 0)$ (that is, in this note, for a convolution operator, we will identify the integral kernel with the convolution kernel) and $p(g) = p_1(g)$.

Recall that (c.f. for example \cite{FoSt})
\begin{align} \label{hkp1}
p_h(g) = h^{-\frac{Q}{2}} p(\delta_{\frac{1}{\sqrt{h}}}(g)), \qquad  \forall h > 0, \  g \in \mathcal G.
\end{align}

The kernel of the $j^{\mathrm{th}}$  Riesz transform $\X_j (-\Delta)^{-\frac{1}{2}}$ ($1 \leq j \leq  n$) is written simply as $K_j(g, g') = K_j(g'^{-1} \circ g)$. It is well-known that
\begin{align} \label{kjs}
K_j \in C^{\infty}(\mathcal G \setminus \{0\}), \ K_j(\delta_r(g)) = r^{-Q} K_j(g), \quad \forall g \neq 0, \ r > 0, \ 1 \leq j \leq  n,
\end{align}
which also can be explained by \eqref{hkp1} and the fact that
\begin{align*}
K_j(g) = \frac{1}{\sqrt{\pi}} \int_0^{+\infty} h^{-\frac{1}{2}} \X_j p_h(g) \, dh  = \frac{1}{\sqrt{\pi}} \int_0^{+\infty} h^{- \frac{Q}{2} - 1} \left( \X_j p \right)(\delta_{\frac{1}{\sqrt{h}}}(g)) \, dh.
\end{align*}

Next we recall the homogeneous norm $\rho$ (see for example \cite{FoSt}) on $\mathcal G$
which is defined to be a continuous function
$g\to \rho(g)$ from $\mathcal G$ to $[0,\infty)$, which is $C^\infty$ on $\mathcal G\backslash \{0\}$
%
and satisfies
\begin{enumerate}
\item[(a)] $\rho(g^{-1}) =\rho(g)$;
\item[(b)] $\rho({ \delta_r(g)}) =r\rho(g)$ for all $g\in \mathcal G$ and $r>0$;
\item[(c)] $\rho(g) =0$ if and only if $g=0$.
\end{enumerate}
For the existence (also the construction) of the homogeneous norm $\rho$ on  $\mathcal G$,
we refer to  \cite[Chapter 1, Section A]{FoSt}. { For convenience,
we set
\begin{align*}
\rho(g, g') = \rho(g'^{-1} \circ g) = \rho(g^{-1} \circ g'), \quad \forall g, g' \in \mathcal{G}.
\end{align*}
Recall that  this defines a quasi-distance in the sense of  Coifman--Weiss, in fact, we have the following improved pseudo-triangle inequality, there exists a constant $C_\rho \geq1$ such that (see \cite{BLU})
 \begin{align} \label{qdr}
|\rho(g_1, g_2) -\rho(g_1, g_3) |\leq C_\rho \rho(g_2, g_3) \qquad \forall g_1, g_2, g_3 \in  \mathcal{G}.
\end{align}


We now denote by $d$ the Carnot--Carath\'eodory metric associated to $\{\X_j\}_{1\leq j\leq n}$, which is equivalent to $\rho$ in the sense that there exists constants $C_1,C_2>0$ such that
for every $g_1,g_2\in\mathcal G$ (see \cite{BLU}),
\begin{align} \label{equi metric d}
 C_1\rho(g_1,g_2)\leq d(g_1,g_2)\leq C_2 \rho(g_1,g_2).
\end{align}
We point out that {the Carnot--Carath\'eodory metric $d$ even on the most special stratified Lie group, the Heisenberg group, is not smooth on $\mathcal G \setminus \{0\}$.}

In the sequel, to avoid confusing notation, for $g\in\mathcal G$ and $r>0$, {$B(g,r)$  
denotes the open ball 
defined by $\rho$. 

For the Folland--Stein BMO space  ${\rm BMO}(\mathcal G)$,  note that we have an {equivalent norm}, defined by
 $$\|b\|'_{\operatorname{BMO}(G)}=\sup_{B}\inf_{c}\frac{1}{|B|}\int_{B}|b(x)-c|dx.$$
 {For a ball B}, the infimum above is attained and the constants where this happens can be found among the median values.
 
 \begin{definition}\label{def median}
 A {median value} of a function $b$ over a ball $B$ will be any real number $m_b(B)$ that satisfies simultaneously
 $$\left|\{x\in B: b(x)<m_b(B)\}\right|\geq \frac{1}{2}|B|$$
 and
 $$\left|\{x\in B: b(x)>m_b(B)\}\right|\geq \frac{1}{2}|B|.$$
 \end{definition}
Following the standard proof in \cite[p.199]{To}, we can see that the constant $c$ in the definition of $\|b\|'_{\operatorname{BMO}(G)}$ can be chosen to be a median value of $b$.
 
We now recall that given a weight $\nu$, the weighted BMO space ${\rm BMO}_\nu(\mathcal G)$ is
defined as
$ {\rm BMO}_{\nu}(\mathcal G):=\{ b\in L^1_{loc}(\mathcal G):\  \|b\|_{{\rm BMO}_{\nu}(\mathcal G)}<\infty \},$
where
$$\|b\|_{{\rm BMO}_{\nu}(\mathcal G)}:=\sup_B {1\over \nu(B)}\int_B|b(g)-b_B|dg.$$ We also recall the definition of a
Muckenhoupt $A_p$ weight.
  Let $w(x)$ be a nonnegative locally integrable function
  on~$\mathcal G$. For $1 < p < \infty$, we
  say $w$ is an $A_p$ \emph{weight}, written $w\in
  A_p$, if
  \[
    [w]_{A_p}
    := \sup_B \left({1\over |B|}\int_B w(g)\,dg\right)
    \left({1\over |B|}\int_B
      \left(\dfrac{1}{w(g)}\right)^{1/(p-1)}\,dg\right)^{p-1}
    < \infty.
  \]
  Here the supremum is taken over all balls~$B\subset \mathcal G$.
  The quantity $[w]_{A_p}$ is called the \emph{$A_p$~constant
  of~$w$}.

Given $w\in A_p$ with $1<p<\infty$, the space $L^p_w(\mathcal G)$ is defined as the set of $w$-measurable functions with
$\|f\|_{L^p_w(\mathcal G)}:=\Big( \int_{\mathcal G} |f(g)|^pw(g)dg\Big)^{1\over p} <\infty$.

\medskip
We also recall the definition for Heisenberg group. Recall that $\mathbb{H}^{n}$ is
the Lie group with underlying manifold $\mathbb{C}^{n} \times \mathbb{R}= \{[z,t]: z \in \mathbb{C}^{n}, t \in \mathbb{R}\}$
and multiplication law
\begin{align*}
[z,t]\circ [z', t']&=[z_{1},\cdots,z_{n}, t] \circ [z_{1}',\cdots,z_{n}', t']\\
&: =\Big[z_{1}+z_{1}',\cdots, z_{n}+z_{n}',
t+t'+2 \mbox{Im} \big(\sum_{j=1}^{n}z_{j} \bar{z}'_{j}\big) \Big].
\end{align*}
The identity of $\mathbb{H}^{n}$ is the origin and the inverse is given
by $[z, t]^{-1} = [-z,-t]$.
Hereafter, we agree to identify $\mathbb{C}^{n}$  with $\mathbb{R}^{2n}$ and to use the following notation to denote the
points  of $\mathbb{C}^{n} \times \mathbb{R} \equiv \mathbb{R}^{2n+1}$:
$g=[z,t] \equiv [x,y,t] = [x_{1},\cdots, x_{n}, y_{1},\cdots, y_{n},t]
$
with $z = [z_{1},\cdots, z_{n}]$, $z_{j} =x_{j}+iy_{j}$ and $x_{j}, y_{j}, t \in \mathbb{R}$ for $j=1,\ldots,n$. Then, the
composition law $\circ$ can be explicitly written as
\begin{equation*}
g\circ g'=[x,y,t] \circ [x',y',t'] = [x+x', y+y', t+t' + 2\langle y, x'\rangle -2\langle x, y'\rangle],
\end{equation*}
where $\langle \cdot, \cdot\rangle$ denotes the usual inner product in $\mathbb{R}^{n}$.

The Lie algebra of the left invariant vector fields of $\H^n$ is generated by (here and in the following, we shall identify vector fields as the associated first order differential operators)
\begin{align}\label{XYT}
X_j={\partial \over \partial x_j} + 2y_j {\partial\over\partial t}, \quad Y_j={\partial \over \partial y_j} - 2x_j {\partial\over\partial t}, \quad T={\partial\over \partial t},
\end{align}
for $j=1,\ldots,n$. We now denote $X_{n+j}=Y_j$ for $j=1,\ldots,n$. Then the sub-Laplacian on $\mathbb H^n$ is $\Delta_{\mathbb H^n}=\sum_{j=1}^{2n}X_j^2$.

{The Heisenberg distance derived form the the  Kor\'anyi norm (which is also the standard homogeneous norm on $\mathbb H^n$)
\begin{align}\label{dk}
d_K(g)=(\|z\|^4+ t^2)^{1\over 4}, g=(z,t)\in\H^n,
\end{align}
is given by
\begin{align*}
d_K(g, g') = d_K(g'^{-1} \circ g) = d_K(g^{-1} \circ g'), \quad \forall g, g' \in \mathbb H^n.
\end{align*}
It is also a quasi-distance and  there exists a constant $C_K\geq1$ such that 
 \begin{align} \label{qdr1}
d_K(g_1, g_2) \leq C_{d_K}  \left( d_K(g_1, g') + d_K(g', g_2)   \right), \qquad \forall g_1, g_2, g' \in  \mathbb H^n.
\end{align}
} 

\vspace{0.2cm}

{

{\bf Notation:} 
In what follows, $C$, $C'$, etc. will denote various constants which depend only on the triple
$(\mathcal{G}, \rho, \{ \X_j \}_{1 \leq j \leq n})$. By $A\lesssim B$,
we shall mean $A\le C B$ with such a
 $C$, and $A\sim B$ stands for  $A \le C B$  and  $B\le C' A$.
}

\section{Revisit of the lower bound for kernel of Riesz transform $R_j:= \X_j (-\Delta)^{-\frac{1}{2}}$ and proof of Theorem \ref{thm0}}\label{S3}
\setcounter{equation}{0}

In this section, we study a suitable version of the lower bound for the kernel of the Riesz transform $R_j:= \X_j (-\Delta)^{-\frac{1}{2}}$, $j=1,\ldots,n$, on stratified Lie group $\mathcal G$. Here we make good use of the dilation structure on $\mathcal G$. It is not clear whether one can obtain similar lower bounds for the Riesz kernel on a general nilpotent Lie groups which is not stratified.

{To begin with, we first recall that
by \eqref{equi metric d} and the classical estimates for heat kernel and its derivations on stratified groups (see for example \cite{VSC92}), it is well-known that
\begin{align} \label{MEHT}
&|K_j(g, g')| + \rho(g, g') \sum_{i = 1}^n \left( |\X_{i, g} K_j(g, g')| + |\X_{i, g'} K_j(g, g')| \right) \nonumber \\
&\lesssim \rho(g, g')^{-Q}, \quad \forall \ 1 \leq j \leq n, \  g \neq g',
\end{align}
where $\X_{i, g}$ denotes the derivation with respect to $g$.

In \cite{DLLW}, the first four authors showed the following properties for the Riesz kernel $K_j$. 
\begin{lemma}[\cite{DLLW}]\label{lem non zero}
For all $1 \leq j \leq n$, we have $K_j \not\equiv 0$ in $\mathcal{G} \setminus \{ 0 \}$.
\end{lemma}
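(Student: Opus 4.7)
I would argue by contradiction. Fix $j_0 \in \{1,\ldots,n\}$ and suppose $K_{j_0} \equiv 0$ on $\mathcal G \setminus \{0\}$. Since $R_{j_0}$ is a translation-invariant operator bounded on $L^2(\mathcal G)$, its convolution kernel $K_{j_0}$ is a well-defined tempered distribution on $\mathcal G$ whose restriction to $\mathcal G\setminus\{0\}$ coincides with the smooth function recalled in \eqref{MEHT}. The hypothesis then says $\supp K_{j_0}\subset\{0\}$, and the classical structure theorem for distributions supported at a single point yields a finite-order representation
\begin{equation*}
K_{j_0}\;=\;\sum_{|\alpha|\le N} a_\alpha\,\X^\alpha\delta_0,
\end{equation*}
where the $\X^\alpha$ range over left-invariant monomials (passing, if needed, through exponential coordinates to apply the Euclidean theorem).

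Next I would use the homogeneity $K_{j_0}\circ\delta_r=r^{-Q}K_{j_0}$ from \eqref{kjs}. Each monomial $\X^\alpha\delta_0$ is homogeneous of degree $-Q-d(\alpha)$, where $d(\alpha)=\sum_i\alpha_i\deg(\X_i)\ge 0$ is the homogeneous order of $\X^\alpha$, so matching homogeneity forces $d(\alpha)=0$ and hence $\alpha=0$. This reduces the representation to $K_{j_0}=c_{j_0}\delta_0$ for some scalar $c_{j_0}$, and the heat-kernel formula $K_{j_0}(g)=\pi^{-1/2}\int_0^\infty h^{-1/2}\X_{j_0}p_h(g)\,dh$ shows that $K_{j_0}$ is real, so $c_{j_0}\in\mathbb R$. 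In operator form the statement becomes $\X_{j_0}(-\Delta)^{-1/2}=c_{j_0}I$ on $L^2(\mathcal G)$.

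The final step extracts a contradiction. Because $(-\Delta)^{\pm 1/2}$ are both functions of the same self-adjoint operator they commute, and right-multiplying by $(-\Delta)^{1/2}$ (on a common dense domain such as the Schwartz class on $\mathcal G$) yields
\begin{equation*}
\X_{j_0}\;=\;c_{j_0}\,(-\Delta)^{1/2}.
\end{equation*}
Since $\mathcal G$ is nilpotent and hence unimodular, the real left-invariant vector field $\X_{j_0}$ is skew-adjoint on $L^2(\mathcal G)$, i.e.\ $\X_{j_0}^{*}=-\X_{j_0}$; the right-hand side is self-adjoint because $c_{j_0}$ is real and $(-\Delta)^{1/2}$ is self-adjoint. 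Taking adjoints of the displayed identity therefore gives $-\X_{j_0}=\X_{j_0}$, i.e.\ $\X_{j_0}=0$, which is absurd.

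I expect the only point needing care to be the operator-theoretic passage from the $L^2$ identity $\X_{j_0}(-\Delta)^{-1/2}=c_{j_0}I$ to the pointwise identity $\X_{j_0}=c_{j_0}(-\Delta)^{1/2}$, which must be interpreted on a common dense domain; this is routine on the Schwartz class, which is preserved by both $\X_{j_0}$ and the spectral powers of $-\Delta$. As a purely algebraic alternative to the adjoint argument one could square the identity to obtain $\X_{j_0}^{2}=-c_{j_0}^{2}\sum_{k}\X_k^{2}$ and invoke the linear independence of the monomials $\{\X_k^{2}\}_{k=1}^{n}$ in the universal enveloping algebra $U(\mathfrak g)$ (Poincar\'e--Birkhoff--Witt) to force $1+c_{j_0}^{2}=0$, again contradicting $c_{j_0}\in\mathbb R$.
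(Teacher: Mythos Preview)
Your argument is correct. The reduction to $K_{j_0}=c_{j_0}\delta_0$ via the structure theorem and homogeneity is clean; note only that the homogeneity you need is the \emph{distributional} statement $K_{j_0}\circ\delta_r=r^{-Q}K_{j_0}$, which follows from the dilation-invariance of the operator $R_{j_0}=\X_{j_0}(-\Delta)^{-1/2}$ rather than from the pointwise identity \eqref{kjs} (the latter being vacuous under your hypothesis). The passage $\X_{j_0}(-\Delta)^{-1/2}=c_{j_0}I \Rightarrow \X_{j_0}=c_{j_0}(-\Delta)^{1/2}$ is legitimate on the Schwartz class, and the skew/self-adjoint clash then forces $\X_{j_0}=0$. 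Your PBW alternative is also fine; when $n\ge 2$ the coefficients of $\X_k^2$ with $k\neq j_0$ already give $c_{j_0}=0$, whence $\X_{j_0}^2=0$, while for $n=1$ the group is $\mathbb R$ and you get $1+c_{j_0}^2=0$ directly.

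As for comparison: the present paper does not prove this lemma at all---it is quoted verbatim from \cite{DLLW} and used as a black box in the proof of Theorem~\ref{thm0}. Your argument is therefore a self-contained substitute. It is purely operator-theoretic and makes no use of heat-kernel asymptotics or explicit formulae, so it would transfer to any setting where the Riesz transforms are dilation-invariant convolution operators with real kernels and the underlying vector fields are skew-adjoint; that is a genuine gain in generality over arguments that rely on more detailed kernel information.
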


\color{black}
%

Now we prove Theorem \ref{thm0}.

\begin{proof}[Proof of Theorem \ref{thm0}]
For any fixed $ j\in\{1,\ldots,n \}$, by Lemma \ref{lem non zero} and the scaling property of $K_j$ (c.f. \eqref{kjs}), there exists
$\tilde g_j$ satisfying 
$$ \rho(\tilde g_j)=1\quad {\rm and}\quad K_j(\tilde g_j^{-1})\not=0. $$
Since $K_j$ is a $C^\infty$ function on $\mathcal G\backslash\{0\}$, there exists $0<\epsilon<1$ such that
\begin{align}\label{kjmore}
  K_j(g')\not=0 \quad{\rm and}\quad |K_j(g')|>{1\over2} |K_j(\tilde g_j^{-1})|  
\end{align}
for all $g'\in B(\tilde g_j^{-1},4C_\rho \epsilon)$, where $C_\rho>1$ is the constant in \eqref{qdr}. To be more specific, we have that for all $g'\in B(\tilde g_j^{-1},4C_\rho \epsilon)$, the values $K_j(g')$ and  $K_j(\tilde g_j^{-1})$ have the same sign.

Take a large $r_o$ with $r_o>{1\over \epsilon}$, we now set
\begin{align*}
G_e&:= \bigcup_{r\geq{r_o\over\alpha}}\delta_r\Big(  B( \tilde g_j,\epsilon )  \Big),
\end{align*}
where by \eqref{qdr}, for $\epsilon$ small enough, we have
$$\alpha:= \min_{ g\in \overline{ B( \tilde g_j,\epsilon ) } } \rho(g) \geq 1-C_\rho\epsilon \geq 1-{1\over100}$$
and
\begin{align*}
\beta:=\max_{ g\in \overline{ B( \tilde g_j,\epsilon ) } } \rho(g)\leq 1+C_\rho\epsilon \leq 1+{1\over100}.
\end{align*}


Now for any $g\in \mathcal G$, we let 
\begin{align*}
G:=\tau_g(G_e).
\end{align*}
It is clear that 
$$\inf_{g'\in G}\rho(g,g')=r_o.$$
We now show that the set $G$ defined as above satisfies all the required conditions. 

First, we point out that for any $g_2\in G$ and for any $g_1\in B(g,1)$,
following similar calculation and estimates in \cite[Theorem 1.1]{DLLW}, we obtain that 
$$|K_j(g_1,g_2)| 
\geq {1\over 2} r^{-Q}|K_j(\tilde g_j^{-1})|,\quad
\big|K_j(g_2,g_1)\big| \geq {1\over 2} r^{-Q}\big|K_j(\tilde g_j^{-1})\big|.$$
Moreover, all the $K_j(g_1,g_2)$ as well as all the $K_j(g_2,g_1)$, $g_1\in B(g,1), g_2\in G$, have the same sign.
We denote 
$$C:= \min_{1\leq j\leq n} {1\over 2} \big|K_j(\tilde g_j^{-1})\big|,$$
then \eqref{lower bound} holds with the constant $C$ defined above.

We now only show that for any $R>2r_o$,
\eqref{regular} holds, since the other properties are obvious. In fact, we note that
$$ B(g,R)\supset B(g,R)\cap G = \tau_g\big(B(0,R)\cap G_e\big). $$
Moreover, we have
\begin{align*}
B(0,R)\cap G_e &\supset \bigcup_{ {r_o\over\alpha}\leq r\leq { {1-{1\over100}}\over {1+\beta}}R } \delta_r\Big( {B(\tilde g_j,\epsilon)} \Big)\supset \delta_{{ {1-{1\over100}}\over {1+\beta}}R }\Big( {B(\tilde g_j,\epsilon)} \Big).
\end{align*}
As a consequence, we have
\begin{align*}
|B(0,1)|R^Q &=|B(g,R)|  \geq |B(g,R)\cap G|\\
&\geq \Big|\delta_{{ {1-{1\over100}}\over {1+\beta}}R }\Big( {B(\tilde g_j,\epsilon)} \Big)\Big|
\\
&\geq C_\epsilon R^Q,
\end{align*}
which shows that \eqref{regular} holds.
This completes the proof of Theorem \ref{thm0}.
\end{proof}

By using $r_or$ to replace $r_o$ in the above proof, 
we can also get the similar result for any ball $B(g, r)$.
 \begin{corollary}\label{cor1}
Suppose that $\mathcal G$ is a stratified Lie group with homogeneous dimension $Q$ and that $j\in\{1,2,\ldots,n\}$.  
There exist a large positive constant $r_o$ and a positive constant $C$ depending on $K_j$ such that for every 
$g\in \mathcal G$ there exists a set $G\subset \mathcal G$ such that  $\inf\limits_{g'\in G} \rho(g,g')=r_or$ and that  for every $g_1\in B(g,r)$ and $g_2\in G$, we have
$$ |K_j(g_1,g_2)|\geq C  \rho(g_1,g_2)^{-Q}, \quad
  |K_j(g_2,g_1)|\geq C  \rho(g_1,g_2)^{-Q},$$
all $K_j(g_1,g_2)$ as well as all $K_j(g_2,g_1)$ have the same sign.  

Moreover, the set $G$ is regular, in the sense that $|G|=\infty$ and that for any $R>2r_or$,
$$ |B(g,R) \cap G|\approx R^Q,  $$
where the implicit constants are independent of $g$ and $R$. 
\end{corollary}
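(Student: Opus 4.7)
The plan is to deduce Corollary \ref{cor1} from Theorem \ref{thm0} by a dilation argument, exploiting two structural features of stratified groups: the homogeneity $K_j(\delta_r(g)) = r^{-Q} K_j(g)$ recorded in \eqref{kjs}, and the fact that each $\delta_r$ is a Lie group automorphism that scales $\rho$ by the factor $r$ and Haar measure by $r^Q$. Concretely, I would let $G_0$ be the twisted truncated sector furnished by Theorem \ref{thm0} applied at the base point $0 \in \mathcal G$, and define
$$G := \tau_g(\delta_r(G_0)).$$
Since $\rho$ is left-invariant and $\rho(\delta_r(\cdot)) = r \rho(\cdot)$, the identity $\inf_{g' \in G} \rho(g, g') = r \inf_{h \in G_0} \rho(0, h) = r_o r$ is immediate.

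For the kernel lower bound, given $g_1 \in B(g, r)$ and $g_2 \in G$, write $g_1 = g \circ \delta_r(h_1)$ with $h_1 \in B(0,1)$ and $g_2 = g \circ \delta_r(h_2)$ with $h_2 \in G_0$. Because $\delta_r$ is a group homomorphism,
$$g_2^{-1} \circ g_1 = \delta_r(h_2^{-1}) \circ \delta_r(h_1) = \delta_r(h_2^{-1} \circ h_1),$$
so by the convolution-kernel convention $K_j(g_1, g_2) = K_j(g_2^{-1} \circ g_1)$ together with \eqref{kjs},
$$K_j(g_1, g_2) = r^{-Q} K_j(h_1, h_2), \qquad \rho(g_1, g_2) = r\, \rho(h_1, h_2).$$
Theorem \ref{thm0} applied to the pair $(h_1, h_2)$ then gives $|K_j(g_1,g_2)| \geq C r^{-Q} \rho(h_1, h_2)^{-Q} = C \rho(g_1, g_2)^{-Q}$, and the common sign of $K_j(h_1, h_2)$ over $B(0,1)\times G_0$ transfers to $K_j(g_1,g_2)$ since $r^{-Q} > 0$. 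The bound and sign statement for $K_j(g_2, g_1)$ are identical with the roles of the two points swapped.

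For the regularity claim \eqref{regular}, left-invariance of Haar measure together with $\delta_r(B(0, R/r)) = B(0, R)$ and $|\delta_r(E)| = r^Q |E|$ yield
$$|B(g,R) \cap G| = |B(0, R) \cap \delta_r(G_0)| = r^Q\, |B(0, R/r) \cap G_0|.$$
For $R > 2 r_o r$ one has $R/r > 2 r_o$, so Theorem \ref{thm0} furnishes $|B(0, R/r) \cap G_0| \approx (R/r)^Q$, and combining the two gives $|B(g,R) \cap G| \approx R^Q$; the same identity shows $|G| = r^Q |G_0| = \infty$ because $|G_0|=\infty$. The only point requiring care is the bookkeeping needed to verify that $\delta_r$ interacts correctly with the group law, the quasi-metric $\rho$, and the Haar measure simultaneously; this is standard for stratified groups, so I do not anticipate any genuine obstacle beyond organizing the transformations cleanly.
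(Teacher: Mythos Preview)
Your proposal is correct and is precisely the dilation argument the paper has in mind: the paper's one-line proof ``using $r_or$ to replace $r_o$ in the above proof'' amounts, on inspecting the construction $G_e = \bigcup_{s \geq r_o/\alpha} \delta_s(B(\tilde g_j,\epsilon))$ in the proof of Theorem~\ref{thm0}, to exactly your choice $G = \tau_g(\delta_r(G_0))$. Your write-up simply makes explicit the homogeneity bookkeeping that the paper leaves implicit.
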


\section{Two weight estimates for commutator $[b,R_j]$ and the proof of Theorem \ref{thm two weight}}
\setcounter{equation}{0}

To begin with, we point out that by considering the stratified Lie group $\mathcal G$ as a space of homogeneous type in the sense of Coifman and Weiss with the metric $\rho$, we have a system  $\mathcal D(\mathcal G)$ of dyadic cubes on $\mathcal G$. We refer to the original construction from \cite{Chr} and the refinement from \cite{HK}. See also \cite[Section 2]{KLPW} for a summary. 


Next we also recall that there exist adjacent systems of dyadic cubes on $\mathcal G$, denoted by $\mathcal D^1(\mathcal G)$, $\ldots$, $\mathcal D^{\mathcal T} (\mathcal G)$, such that 
for each ball $B\subset \mathcal G$, there exist $t\in\{1,2,\ldots, \mathcal T\}$ and $S\in \mathcal D^t(\mathcal G)$ satisfying
$$ B\subset S\subset C B, $$ 
where $C$ is an absolute constant independent of $t$ and $S$, and $CB$ denotes the ball with the same center as $B$ and radius $C$ times that of $B$. See  \cite{HK} and \cite[Section 2.4]{KLPW} for more details. Associated to each systems of dyadic cubes, one has the dyadic BMO space as follows. A dyadic weighted BMO space associated with the system $\mathcal D^t(\mathcal G)$   is
defined as
${\rm BMO}_{\nu,\mathcal D^t(\mathcal G)}(\mathcal G):=\{ b\in L^1_{loc}(\mathcal G):\  \|b\|_{{\rm BMO}_{\nu,\mathcal D^t(\mathcal G)}(\mathcal G)}<\infty \},$
where
$\|b\|_{{\rm BMO}_{\nu,\mathcal D^t(\mathcal G)}(\mathcal G)}:=\sup_{S\in \mathcal D^t(\mathcal G) } {1\over \nu(S)}\int_S|b(g)-b_S|dg.$ 

Then according to the dyadic structure theorem studied in \cite{HK,KLPW}, one has
$$  {\rm BMO}_{\nu}(\mathcal G)=\bigcap_{t=1}^\mathcal T {\rm BMO}_{\nu,\mathcal D^t(\mathcal G)}(\mathcal G).$$
Thus, to verify a function $b$ is in ${\rm BMO}_{\nu}(\mathcal G)$, it suffices to verify it belongs to 
each weighted dyadic BMO space ${\rm BMO}_{\nu,\mathcal D^t(\mathcal G)}(\mathcal G)$.

Given a dyadic cube $S\in \mathcal D^t(\mathcal G)$ with $t=1,\ldots, \mathcal T$, and a measurable function $f$ on $\mathcal G$, we define 
the local mean oscillation of $f$ on $S$ by
$$ w_\lambda(f;S)=\inf_{c\in\mathbb R} \big( (f-c)\chi_S \big)^*(\lambda |S|), \quad 0<\lambda<1, $$
where $f^*$ denotes the non-increasing rearrangement of $f$.

With these notation and dyadic structure theorem above, following the same proof in \cite[Lemma 2.1]{Ler}, we also obtain that 
for any weight $\nu\in A_2(\mathcal G)$, we have
\begin{align}\label{BMO norm}
\|b\|_{ {\rm BMO}_\nu(\mathcal G)  }\leq C \sum_{t=1}^\mathcal T \|b\|_{{\rm BMO}_{\nu,\mathcal D^t(\mathcal G)}(\mathcal G)}   \leq C\sum_{t=1}^\mathcal T \sup_{S\in \mathcal D^t(\mathcal G)} w_\lambda(b;S) {|S|\over \nu(S)}, \quad 0<\lambda\leq {1\over 2^{Q+2}},
\end{align}
where $C$ depends on $\nu$.

Next, we point out that from the construction of dyadic system $\mathcal D^t(\mathcal G)$ as in \cite{Chr, HK}, for every dyadic cube $S\in \mathcal D^t(\mathcal G)$ in level $l$, there exist two balls $B_1$ and $B_2$ with radius $r_1$ and $r_2$, respectively, such that
\begin{align}\label{ball in and out}
 B_1\subset S\subset B_2 
  \end{align}
and that $C_1r_1 \leq 2^{-l} \leq \overline C_1 r_1$ and $C_2r_2 \leq 2^{-l} \leq \overline C_2 r_2$, where the  constants $C_1,\overline C_1, C_2$ and $\overline C_2$ are independent of $r_1,r_2$ and $l$.

To prove Theorem \ref{thm two weight}, we first need to establish the following result.
\begin{proposition}\label{prop w}
Suppose that $\mathcal G$ is a stratified Lie group with homogeneous dimension $Q$ as defined in Section 2, $b\in L^1_{loc}(\mathcal G)$ and that $K_j$ is the kernel of the $j$th Riesz transform on $\mathcal G$, $j=1,2,\ldots,n$. 
Let $r_o$ be the constant in Corollary \ref{cor1}.
Then for any $k_0>r_o$ and for any dyadic cube  $S\in \mathcal D(\mathcal G)$, there exist measurable sets $E\subset S$ and
$F\subset k_0B_1$ with $B_1$ the ball in \eqref{ball in and out}, such that
\begin{enumerate}
\item $|E\times F| \sim |S|^2$,

\item $w_{1\over 2^{Q+2}}(b;S)\leq |b(g)-b(g')|$,\quad $\forall (g,g')\in E\times F$,

\item $K_j(g,g')$ and $ b(g)-b(g') $ do not change sign for any $(g,g')\in E\times F$,

\item $ |K_j(g,g')|\geq C  \rho(g,g')^{-Q}$ for any $(g,g')\in E\times F$.

\end{enumerate}

\end{proposition}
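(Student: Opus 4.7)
The plan is to combine the kernel lower bound from Corollary~\ref{cor1} with a Lerner-style quantile decomposition of $b$ on $S$, handling the fact that $F$ must lie in the twisted sector (possibly outside $S$) by splitting $F$ according to the value of $b$ relative to a central level and then pairing with the appropriate extreme quantile inside $S$.

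First I would apply Corollary~\ref{cor1} to a ball $\widetilde B$ containing $S$ whose centre lies in $S$ and whose radius is comparable to $r_1$ (an appropriate dilate of $B_1$ works, using that $S\subset B_2$ and $r_2\sim r_1$). This produces a twisted sector $\widetilde G\subset\mathcal G$ such that for every $g_1\in\widetilde B$---in particular for every $g_1\in S$---and every $g_2\in\widetilde G$, $K_j(g_1,g_2)$ has a constant sign, $|K_j(g_1,g_2)|\geq C\rho(g_1,g_2)^{-Q}$, and $\widetilde G$ is regular on the scale $R\sim k_0 r_1$. I would then set
\[
F_0 := \widetilde G \cap k_0 B_1,
\]
so that $|F_0|\sim |k_0 B_1|\sim |S|$ by \eqref{regular}.

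Next, setting $\lambda:=1/2^{Q+2}$ and $\omega:=w_\lambda(b;S)$, I would introduce the upper and lower half-$\lambda$ quantiles of $b$ on $S$,
\[
\alpha^+ := \inf\bigl\{\alpha : |\{g\in S : b(g) > \alpha\}| \leq \tfrac{\lambda|S|}{2}\bigr\},\qquad \alpha^- := \sup\bigl\{\alpha : |\{g\in S : b(g) < \alpha\}| \leq \tfrac{\lambda|S|}{2}\bigr\}.
\]
Standard properties of the non-increasing rearrangement give $|\{b\geq\alpha^+\}\cap S|,\,|\{b\leq\alpha^-\}\cap S|\geq \lambda|S|/2$, and testing $c:=(\alpha^++\alpha^-)/2$ in the definition of $w_\lambda(b;S)$ shows that $|\{g\in S:|b(g)-c|>(\alpha^+-\alpha^-)/2\}|\leq \lambda|S|$, hence $\alpha^+-\alpha^-\geq 2\omega$. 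Splitting $F_0 = F_0^+\cup F_0^-$ according to the sign of $b-c$ on $F_0$ and retaining the half of larger measure---say $|F_0^-|\geq |F_0|/2\sim |S|$---I would set
\[
F := F_0^-, \qquad E := \{g\in S : b(g) \geq \alpha^+\},
\]
the symmetric case being treated identically with $\alpha^-$ and $\{b\leq\alpha^-\}$ in the opposite roles. Property~(1) then follows from $|E\times F|\gtrsim (\lambda|S|/2)(|F_0|/2)\sim |S|^2$; property~(2) from $b(g)-b(g')\geq\alpha^+-c=(\alpha^+-\alpha^-)/2\geq\omega$ on $E\times F$; property~(3) is immediate since $b(g)-b(g')>0$ throughout $E\times F$ and Corollary~\ref{cor1}, applied to $\widetilde B$, guarantees that $K_j(g,g')$ has constant sign on $S\times\widetilde G\supset E\times F$; property~(4) is the pointwise lower bound from the same application.

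The main obstacle I expect is the geometric calibration in the first step: one must arrange that the enlargement $\widetilde B\supset S$ has radius close enough to $r_1$ that the sector $\widetilde G$, which begins at distance $r_o$ times this radius from its centre, still meets $k_0 B_1$ in a set of measure comparable to $|S|$ under only the assumption $k_0>r_o$. This is a careful bookkeeping of doubling and quasi-metric constants relating $r_1$, $r_2$ and the diameter of $S$, potentially requiring one to absorb fixed universal factors into the choice of $\widetilde B$, rather than a genuine analytic difficulty. The analytic content---the quantile identity $\alpha^+-\alpha^-\geq 2\omega$---is a direct consequence of the inf-definition of $w_\lambda$ and the layer-cake characterisation of quantiles.
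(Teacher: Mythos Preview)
Your argument is correct and structurally the same as the paper's: apply Corollary~\ref{cor1} to a ball $\widetilde B\supset S$ of comparable radius to obtain the twisted sector, intersect with $k_0B_1$ to get a candidate $F_0$ of measure $\sim|S|$, then pair a large subset of $S$ with a large subset of $F_0$ on which $b(g)-b(g')$ has constant sign and magnitude at least $\omega$.

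The one genuine difference is the choice of reference constant in the sign-splitting step. The paper takes $c:=m_b(F_{k_0})$, the \emph{median of $b$ on the external set} $F_{k_0}$; this automatically makes both halves of $F_{k_0}$ large, and the paper then selects the larger half of $\mathcal E:=\{g\in S:|b(g)-c|\geq\omega\}$ (which exists because $\omega\leq ((b-c)\chi_S)^*(\lambda|S|)$ for every $c$). You instead compute $c$ from the $\lambda/2$-quantiles of $b$ on $S$, which automatically makes both extreme quantile sets $\{b\geq\alpha^+\}$ and $\{b\leq\alpha^-\}$ in $S$ large, and then select the larger half of the split of $F_0$. These are dual choices and both work; your version has the mild advantage that the quantile identity $\alpha^+-\alpha^-\geq 2\omega$ is computed entirely on $S$ and does not reference the external set. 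The geometric calibration issue you flag is real but, as you say, purely bookkeeping---and the paper is equally casual about it (indeed it silently works with $B_2$ rather than $B_1$ when forming $F_{k_0}$).
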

\begin{proof}
From the fact \eqref{ball in and out}, for any dyadic cube  $S\in \mathcal D^t(\mathcal G)$ with $t=1,2,\ldots,\mathcal T$, we now consider
the ball $B_2$ containing $S$ with radius comparable to the side-length of $S$. For simplicity, we denote it by $B=B(x_0,r)$.

From Corollary \ref{cor1}, we have that there exists a large positive constant $r_o$ such that for this
$B(x_0,r)$, there exists a set $G\subset\mathcal G$ such that 
$ \inf\limits_{g'\in G} \rho(g,g')=r_or$ and $K_j(g,g')$  do not change sign for any $(g,g')\in B(x_0,r)\times G$. Moreover, (4) holds.

Now to show the other three properties, we need to consider an appropriate subset of $G$. We define it as follows.
For any $k_0>r_o$, we let $ F_{k_0} := k_0B\cap G $. Then by using  Corollary \ref{cor1} again we have that
\begin{align}\label{Fk0}
|F_{k_0}| \sim k_0^Q r^Q \sim k_0^Q |B|.
\end{align} 

From the definition of $w_{1\over 2^{Q+2}}(b;S)$ we see that there exists a subset $\mathcal E\subset S$ with 
$ |\mathcal E|= {1\over 2^{Q+2}}|S| $ such that for any $g\in\mathcal G$,
\begin{align}
w_{1\over 2^{Q+2}}(b;S)\leq  |b(g) - m_b(F_{k_0})|.
\end{align} 

Next, we show that there exist $E\subset\mathcal E$ and $F\subset F_{k_0}$ such that 
$ |E|={1\over 2^{Q+3}}|S| $, $|F| = {1\over 2} |F_{k_0}|$ and that
\begin{align}\label{b(g)}
|b(g) - m_b(F_{k_0})|\leq | b(g) -b(g') |, \quad \forall (g,g')\in E\times F
\end{align} 
and moreover, $b(g) -b(g')$ does not change sign in $E\times F$.

To see this, we let 
\begin{align*}
E_1&=\{ g\in\mathcal E:\ b(g)\geq m_b(F_{k_0}) \},\quad E_2=\{ g\in\mathcal E:\ b(g)\leq m_b(F_{k_0}) \};\\
F_1&=\{ g'\in F_{k_0}:\ b(g')\geq m_b(F_{k_0}) \},\quad F_2=\{ g'\in F_{k_0}:\ b(g')\leq m_b(F_{k_0}) \}.
\end{align*} 
Then we have 
$$ |F_1|\geq {1\over 2}|F_{k_0}|, \quad   |F_2|\geq {1\over 2}|F_{k_0}|,$$
and there exists $i\in\{1,2\}$ such that $|E_i|\geq {1\over2}|\mathcal E|$. Without loss of generality we assume
$ |E_1|\geq {1\over2}|\mathcal E  |$.
Hence, there exist $E\subset E_1$ and $F\subset F_1$ such that 
$$   |E|={1\over 2}|\mathcal E| \quad {\rm and}\quad  |F|={1\over 2}| F_{k_0}|. $$
Thus, for any $(g,g')\in E\times F$, we have
$$  |b(g) - m_b(F_{k_0})|  = b(g) - m_b(F_{k_0}) \leq b(g)-b(g'), $$
which implies that \eqref{b(g)} holds and that $b(g)-b(g')$ does not change sign in $E\times F$.

As a consequence, we get that (2) and (3) hold. Next, from \eqref{Fk0}, we get that
$$   |E\times F| =|E|\times |F| ={1\over 4} |\mathcal E|\cdot |F_{k_0}| ={1\over 2^{Q+5}} |S|\cdot|F_{k_0}| \sim k_0^Q|S|^2,   $$
which shows that (1) holds.

The proof of Proposition \ref{prop w} is complete.
\end{proof}

\begin{proof}[Proof of Theorem \ref{thm two weight}]
Note that it suffices to prove (ii), since (i) follows from \cite{HLW} or \cite{Ler} using the size and smoothness of Riesz transform kernel only.

To prove (ii), based on \eqref{BMO norm}, it suffices to show that there exists a positive constant $C$ such that for all
dyadic cubes $S\in\mathcal D(\mathcal G)$,
\begin{align}\label{w norm}
w_{1\over 2^{Q+2}}(b;S) \leq C{\nu(S)\over |S|}{\|[b,R_j]\|_{L^p_\mu(\mathcal G)\to L^p_\lambda(\mathcal G)}}.
\end{align}

To see this, we first note that  property (2) in Proposition \ref{prop w} implies that
\begin{align*}
w_{1\over 2^{Q+2}}(b;S) |E\times F|\leq \iint_{E\times F} |b(g)-b(g')|\, dgdg'.
\end{align*}
From this, using property (4) in Proposition \ref{prop w} and the fact that $\rho(g,g')\leq \overline C(k_0+1){\rm diam}(S)$ for all $(g,g')\in E\times F$ with the constant $\overline C$ depending only on $C_2$ and $\overline C_2$ in the inclusion \eqref{ball in and out}, we obtain that
\begin{align*}
w_{1\over 2^{Q+2}}(b;S) |E\times F|\leq C |S|\iint_{E\times F} |b(g)-b(g')|{1\over  \rho(g,g')^{Q}} \, dgdg'.
\end{align*}
From property (3) in Proposition \ref{prop w}, we get that $K_j(g,g')$ and $ b(g)-b(g') $ do not change sign for any $(g,g')\in E\times F$. Hence, taking into account the property (1) in Proposition \ref{prop w}, we have
\begin{align*}
w_{1\over 2^{Q+2}}(b;S) &\leq C{1\over |S|}\iint_{E\times F} |b(g)-b(g')|\, |K_j(g,g')| \, dgdg'\\
&= C{1\over |S|}\bigg|\iint_{E\times F} (b(g)-b(g')) \,K_j(g,g')\, dgdg' \bigg|\\
&\leq C{1\over |S|} \int_E \big| [b,R_j](\chi_F)(g) \big|\,dg.
\end{align*}
From H\"older's inequality, we further have
\begin{align*}
w_{1\over 2^{Q+2}}(b;S) 
&\leq C{1\over |S|} \bigg(\int_E \big| [b,R_j](\chi_F)(g) \big|^p \lambda(g)\,dg\bigg)^{1\over p} \bigg( \int_S \lambda^{-{1\over p-1}}(g)dg\bigg)^{1-{1\over p}}\\
&\leq C{1\over |S|} \mu(F)^{1\over p} \bigg( \int_S \lambda^{-{1\over p-1}}(g)dg\bigg)^{1-{1\over p}}{\|[b,R_j]\|_{L^p_\mu(\mathcal G)\to L^p_\lambda(\mathcal G)}}\\
&\leq C{1\over |S|} \mu(S)^{1\over p} \bigg( \int_S \lambda^{-{1\over p-1}}(g)dg\bigg)^{1-{1\over p}}{\|[b,R_j]\|_{L^p_\mu(\mathcal G)\to L^p_\lambda(\mathcal G)}}\\
&\leq C{\nu(S)\over |S|}{\|[b,R_j]\|_{L^p_\mu(\mathcal G)\to L^p_\lambda(\mathcal G)}},
\end{align*}
where the last inequality follows from the definition of the weight $\nu$. This proves \eqref{w norm}.  The proof of Theorem \ref{thm two weight} is complete.
\end{proof}

\section{Endpoint characterisation of BMO$(\mathcal G)$ via the $L\log^+L\to L^{1,\infty}$ boundedness of the commutator $[b,R_j]$ and the proof of Theorem \ref{thm1}}
\setcounter{equation}{0}

\begin{proof}[Proof of Theorem \ref{thm1}]

\bigskip
For the sufficient part, we point out that it follows directly from \cite{P} with only minor changes, since 
the whole proof can be adapted from Euclidean space to stratified   Lie groups and
the key conditions for the operator $T$ in \cite{P} are the upper bound of size and smoothness properties of the kernel; all of which we have in the setting at hand.

For the necessity part, for any ball $B=B(g_0, r)\subset\mathcal G$, define
$$M(b,B)=\inf_{c}{1\over |B|}\int_{B}|b(g)-c|dg$$ and
we now prove
\begin{align}\label{mb}
\sup_{B}M(b,B)\leq C(Q,\theta_b).
\end{align}

We claim that it suffices to prove \eqref{mb} for the ball $B(0,1)$. To see this,
for a measurable function $f$ on $\mathcal G$, $g_0\in\mathcal G$ and $r>0$, we define the translation and dilation
of $f$ by
$$  \tau_{g_0}(f)(g) := f(g_0\circ g),\quad  \delta_r(f)(g) := f\big(\delta_r(g)\big).  $$
Remark that the Riesz transforms are translation-invariant and satisfy the scaling property, namely, for any $g\in\mathcal G$, $r>0$ and for every $j=1,2,\ldots,n$,
$$  \tau_{g}\big(R_j(f)\big) = R_j\big( \tau_g(f) \big),\quad  \delta_r\big(R_j(f)\big) =  R_j\big( \delta_r(f) \big).$$
So we have
$$ \big[ \tau_{g_0}\circ \delta_r(b), R_j \big](f) = \tau_{g_0}\circ \delta_r\Big( [b,R_j]\big( \delta_{1\over r} \circ \tau_{g_0^{-1}}  (f)\big) \Big).  $$
Thus, it suffices to prove \eqref{mb} for the ball $B(0,1)$.

Let 
$$M:={1\over |B(0,1)|}\int_{B(0,1)}\left|b(g)-m_b(B(0,1))\right|dg,$$
where $m_b(B(0,1))$ is the median of $b$ over $B(0,1)$ as in Definition \ref{def median}. Since
$$[b-m_b(B(0,1)), R_j]=[b, R_j],$$ without loss of generality, we may assume that $m_b(B(0,1))=0$. This means that we can find disjoint subsets $E_1, E_2\subset B(0,1)$ such that 
\begin{align*}
E_1\supset\{g\in B(0,1): b(g)<0\},\quad E_2\supset\{g\in B(0,1): b(g)>0\},
\end{align*}
and
$|E_1|=|E_2|={1\over 2} \left|B(0,1)\right|.$

Define 
$\varphi(g)=\chi_{E_2}(g)-\chi_{E_1}(g).$
Then $\varphi$ satisfies $\supp \varphi\subset B(0,1)$, 
$$\|\varphi\|_{L^{\infty}(G)}=1, \quad \varphi(g)b(g)\geq0, \quad \int_{B(0,1)} \varphi(g)dg=0,$$
 and
  $${1\over |B(0,1)|}\int_{B(0,1)}\varphi(g)b(g)dg=M.$$

In the following, for $i=1,\cdots,10$, $A_i$ denotes a positive constant depending only on $K_j, Q$, $C_\rho$ in \eqref{qdr} and $A_l, 1\leq l<i$.
For the ball $B(0,1)$, take the set $G$ as in Theorem \ref{thm0}. For $g\in G$,
\begin{align*}
\big|\left[b, R_j\right]\varphi(g)\big|=\big|b(g)R_j(\varphi)(g)-R_j(b\varphi)(g)\big|
\geq \big|R_j(b\varphi)(g)\big|-|b(g)| \big|R_j(\varphi)(g)\big|.
\end{align*}
We estimate these two terms separately. By Theorem \ref{thm0},
\begin{align*}
 \big|R_j(b\varphi)(g)\big|
 =\int_{B(0,1)}\left|K_j(g,g')\right| \left|b(g')\right|dg'
 \geq 
 A_1 M \rho(g)^{-Q}.
\end{align*}

For the second term, since $ \int_{B(0,1)} \varphi(g)dg=0$, by \eqref{MEHT}, we have
\begin{align*}
\big|R_j(\varphi)(g)\big| 
&\leq \int_{B(0,1)}\left|K_j(g,g')-K_j(g,0)\right|\left|\varphi(g')\right|dg'
\leq A_2 \rho(g)^{-Q-1}.
\end{align*}
Thus, we have
\begin{align*}
\big|\left[b, R_j\right]\varphi(g)\big|
\geq A_1 M \rho(g)^{-Q}-A_2|b(g)| \rho(g)^{-Q-1}.
\end{align*}

Let 
$$F:=\left\{g\in G: |b(g)|>{A_1\over 2A_2} M \rho(g)\ \ {\rm{and}} \ \ \rho(g)<M^{1\over Q}\right\},$$
then by Theorem \ref{thm0}, we have
\begin{align}\label{m1}
\left|\left\{g\in\mathcal G:\big|\left[b, R_j\right]\varphi(g)\big|>{A_1\over 2}\right\}\right|\nonumber
&\geq\left|\left(G\setminus F\right)\cap B\big(0,M^{1\over Q}\big)\right|\nonumber\\
&\geq A_3 M-|F|,
\end{align}
where the last inequality is due to assuming $M>(2r_o)^{Q}$.

By assumption, we also have
\begin{align}\label{m2}
\nonumber\left|\left\{g\in\mathcal G:\big|\left[b, R_j\right]\varphi(g)\big|>{A_1\over 2}\right\}\right|
\nonumber&\leq\theta_b\int_{B(0,1)}{2|\varphi(g)|\over A_1}\left(1+\log^+\left({2|\varphi(g)|\over A_1}\right)\right)dg\\
&={2\theta_b\over A_1}\left(1+\log^+\left({2\over A_1}\right)\right).
\end{align}
Then \eqref{m1} and \eqref{m2} imply that 
$$|F|\geq A_3 M-{2\theta_b\over A_1}\left(1+\log^+\left({2\over A_1}\right)\right)\geq {A_3\over 2}M,$$
by assuming that $M>{4\theta_b\over A_1A_3}(1+\log^+({2\over A_1}))$.

Let $\psi(g):=\operatorname{sgn}(b(g))\chi_{F}(g)$, then for $g\in B(0,1)$, 
$$\big|\left[b, R_j\right]\psi(g)\big|=\big|b(g)R_j(\psi)(g)-R_j(b\psi)(g)\big|
\geq \big|R_j(b\psi)(g)\big|-|b(g)| \big|R_j(\psi)(g)\big|.$$


From Theorem \ref{thm0}, we have
\begin{align*}
 \big|R_j(b\psi)(g)\big|&=\left|\int_{\mathcal G}K_j(g,g')b(g')\psi(g')dg'\right|
 =\int_{F}\left|K_j(g,g')\right| \left|b(g')\right|dg'\\
 &\geq A_4\int_{F}\rho(g')^{-Q} \left|b(g')\right|dg'
 \geq {A_4A_1\over 2A_2}\int_{F}M\rho(g')^{-Q+1}dg'\\
 &\geq  {A_4A_1\over 2A_2}M^{1\over Q}|F|\\
 &\geq A_5 M^{1+{1\over Q}}.
 \end{align*}

 For the second term, by \eqref{MEHT}, for $g\in B(0,1)$, we have
 \begin{align*}
\big|R_j(\psi)(g)\big|& 
\leq \int_{F}\left|K_j(g,g')\right|\left|\psi(g')\right|dg'
\leq A_6\int_{F}\rho(g')^{-Q}dg'\leq A_7\log M.
\end{align*}
Therefore, for $g\in B(0,1)$, we have
\begin{align*}
\big|\left[b, R_j\right]\psi(g)\big|
\geq A_5 M^{1+{1\over Q}}-A_7\left|b\left(g\right)\right|\log M.
\end{align*}

 By our assumption on $R_j$, we have
\begin{align*}
\left|\left\{g\in\mathcal G:\big|\left[b, R_j\right]\psi(g)\big|>{A_5\over 2}M^{1+{1\over Q}}\right\}\right|
&\leq\theta_b\int_{\mathcal G}{2|\psi(g)|\over A_5 M^{1+{1\over Q}}}\Big(1+\log^+\Big({2|\psi(g)|\over A_5 M^{1+{1\over Q}}}\Big)\Big)dg\\
&\leq A_8\theta_bM^{-{1\over Q}},
\end{align*}
where the last inequality follows by taking $M$ large enough ($M>({2\over A_5})^{Q\over Q+1}$).

On the other hand, 
\begin{align*}
A_8\theta_bM^{-{1\over Q}}
&\geq\left|\left\{g\in B(0,1):\big|\left[b, R_j\right]\psi(g)\big|>{A_5\over 2}M^{1+{1\over Q}}\right\}\right|\\
&\geq\left|\left\{g\in B(0,1):A_5 M^{1+{1\over Q}}-A_7\left|b(g)\right|\log M
>{A_5\over 2}M^{1+{1\over Q}}\right\}\right|\\
&=|B(0,1)|-\left|\left\{g\in B(0,1): |b(g)|\geq A_9M^{1+{1\over Q}}(\log M)^{-1}\right\}\right|\\
&\geq 
|B(0,1)|-|B(0,1)| A_9^{-1}M^{-{1\over Q}}\log M\\
&\geq A_{10},
\end{align*}
where the last inequality comes from the fact that $M^{-{1\over Q}}\log M<Qe^{-1}$ whenever $M>e^{Q}$.
Therefore, we have 
$$M\leq \Big({A_8\over A_{10}}\Big)^Q\theta_b^Q.$$
Summing up the above estimates, we can obtain that
\begin{align*}
M&\leq\max\Big\{ (2r_o)^{Q}, {4\theta_b\over A_1A_3}\Big(1+\log^+\Big({2\over A_1}\Big)\Big),
 \Big({2\over A_5}\Big)^{Q\over Q+1}, \ \ e^{Q}, \ \  \Big({\theta_bA_8\over A_{10}}\Big)^Q\Big\}=: C(Q,\theta_b).
 \end{align*}
The proof of Theorem \ref{thm1} is complete. 
\end{proof}

\section{Endpoint characterisation of commutator $[b,R_j]$ via $H^1(\mathcal G)$ and BMO$(\mathcal G)$ and proof of Theorem \ref{thm2}}
\setcounter{equation}{0}

Recall that the Hardy space $H^1(\mathcal G)$ can be characterized by the atomic decomposition \cite{FoSt}.
\begin{definition}\label{def Hardy atom}
 The space $H^1(\mathcal G)$ is the set of functions of the form $f=\sum_{j=1}^\infty \lambda_j a_j$ with $\{\lambda_j\}\in\ell^1$ and $a_j$ a $(1,q)$ atom, $1<q\leq\infty$, meaning that it is supported on a ball $B\subset \mathcal G$, has mean value zero $\int_B a(g)dg=0$ and has a size condition $\left\Vert a\right\Vert_{L^q(\mathcal G)}\leq |B|^{{1\over q}-1}$.  The norm of $H^1(\mathcal G)$ is defined by:
$$
\left\Vert f\right\Vert_{H^1(\mathcal G)}:=\inf\bigg\{\sum_{j=1}^\infty \left\vert \lambda_j\right\vert: \{\lambda_j\}\in \ell^1, f=\sum_{j=1}^\infty \lambda_j a_j, a_j \textnormal{ a $(1,q)$-atom}\bigg\}
$$
with the infimum taken over all possible representations of $f$ via atomic decompositions.
\end{definition}
We point out that for any $q\in (1,\infty]$, the definitions of $H^1(\mathcal G)$ via these  $(1,q)$-atoms are equivalent.

\begin{proposition}\label{prop1}
Suppose that $\mathcal G$ is a stratified Lie group, $b\in {\rm BMO}(\mathcal G)$  and $j\in\{1,2,\ldots,n\}$. Then 
the following statements are equivalent:
\begin{itemize}
\item[(i)] $[b,R_j]$ is bounded from $H^1(\mathcal G)$ to $L^1(\mathcal G)$;
\item[(ii)]  $b$ satisfies the following condition:
for any $(1,p)$-atom $a$ with $1<p<\infty$ supported in a ball $B$ and $\tilde g\in B$,
\begin{align}\label{h1b}
\left(\int_{(r_oB)^c}\left| K_j(g,\tilde g)\right|dg\right)\left|\int_{B}b(g')a(g')dg'\right|\leq C,
\end{align}
where $r_o$ is the one in Theorem \ref{thm0}.
 \end{itemize}
\end{proposition}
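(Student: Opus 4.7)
The plan is to derive both implications from the single algebraic identity
\[
[b,R_j](a)(g) = \int_B \bigl[K_j(g,g')-K_j(g,\tilde g)\bigr]\bigl(b(g)-b(g')\bigr)a(g')\,dg' - K_j(g,\tilde g)\int_B b(g')a(g')\,dg',
\]
which follows from writing $[b,R_j]a = bR_j a - R_j(ba)$, using the cancellation $\int_B a=0$ to replace $K_j(g,g')$ by $K_j(g,g')-K_j(g,\tilde g)$ in the first term, and adding and subtracting $K_j(g,\tilde g)$ inside $R_j(ba)$. This identity cleanly separates the ``main'' piece $K_j(g,\tilde g)\int_B ba$ appearing in \eqref{h1b} from a ``smoothness error'' that will be controlled by $b\in\operatorname{BMO}(\mathcal G)$ together with the kernel regularity \eqref{MEHT}.

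For the direction (i)$\Rightarrow$(ii), I fix a $(1,p)$-atom $a$ supported in a ball $B$ of radius $r$, a point $\tilde g\in B$, and note that replacing $b$ by $b-b_B$ alters neither $[b,R_j]$ nor $\int_B ba$, so I may assume $b_B=0$. The assumed $H^1\to L^1$ boundedness gives $\|[b,R_j](a)\|_{L^1(\mathcal G)}\le \|[b,R_j]\|_{H^1\to L^1}\|a\|_{H^1}\le C$. Applying the reverse triangle inequality to the displayed identity and integrating over $g\in(r_oB)^c$ yields
\[
\left(\int_{(r_oB)^c}|K_j(g,\tilde g)|\,dg\right)\left|\int_B b\,a\right|\le C+\int_{(r_oB)^c}\!\int_B |K_j(g,g')-K_j(g,\tilde g)|\,|b(g)-b(g')|\,|a(g')|\,dg'\,dg.
\]
For $r_o$ large, \eqref{MEHT} together with the mean value inequality on $\mathcal G$ gives $|K_j(g,g')-K_j(g,\tilde g)|\lesssim r/\rho(g,\tilde g)^{Q+1}$ whenever $g\in(r_oB)^c$ and $g',\tilde g\in B$. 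Splitting $|b(g)-b(g')|\le|b(g)-b_B|+|b(g')-b_B|$ and performing a standard dyadic annular sum around $B$ then bounds the remaining double integral by $C\|b\|_{\operatorname{BMO}(\mathcal G)}\|a\|_{L^1(\mathcal G)}\le C\|b\|_{\operatorname{BMO}(\mathcal G)}$, giving \eqref{h1b}.

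For (ii)$\Rightarrow$(i), by the atomic decomposition of $H^1(\mathcal G)$ it suffices to verify $\|[b,R_j](a)\|_{L^1(\mathcal G)}\le C$ uniformly over $(1,p)$-atoms, and then to pass to finite linear combinations using the a priori $L^2(\mathcal G)$-boundedness of $[b,R_j]$ for $b\in\operatorname{BMO}(\mathcal G)$. I split $\mathcal G=r_oB\cup(r_oB)^c$. On $r_oB$, H\"older's inequality combined with the $L^p$-boundedness of $[b,R_j]$ (a consequence of Theorem \ref{thm two weight} with trivial weights) yields $\|[b,R_j](a)\|_{L^1(r_oB)}\le |r_oB|^{1-1/p}\,\|[b,R_j]\|_{L^p\to L^p}\,\|a\|_{L^p}\le C\|b\|_{\operatorname{BMO}(\mathcal G)}$. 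On $(r_oB)^c$, the displayed identity together with hypothesis \eqref{h1b} controls the main term, while the smoothness/BMO estimate from the previous paragraph controls the error. The main technical point throughout is ensuring that $r_o$, inherited from Theorem \ref{thm0}, is large enough for \eqref{MEHT} to produce the differentiated kernel bound on $(r_oB)^c\times B$ via the pseudo-triangle inequality \eqref{qdr} and for the dyadic annular sum against $b\in\operatorname{BMO}(\mathcal G)$ to yield $O(\|b\|_{\operatorname{BMO}(\mathcal G)})$; this compatibility of the two constants (the one from Theorem \ref{thm0} and the one implicit in \eqref{MEHT}) is the only nontrivial reconciliation.
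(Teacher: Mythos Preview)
Your proof is correct and follows essentially the same route as the paper: split $[b,R_j](a)$ into a local piece on $r_oB$ (handled by H\"older and the $L^p$-boundedness of $[b,R_j]$), a far-away smoothness error (controlled via \eqref{MEHT} and a dyadic annular BMO sum), and the main term $K_j(g,\tilde g)\int_B b\,a$. The only cosmetic difference is that the paper writes the far-away part as three separate pieces $I_2,I_3,I_4$ by inserting $b_B$ at the outset, whereas your two-term identity packages $I_2+I_3$ into a single error integral and only splits $|b(g)-b(g')|\le |b(g)-b_B|+|b(g')-b_B|$ when estimating; the resulting bounds are identical.
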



\begin{proof}
Note that $b\in {\rm BMO}(\mathcal G)$. We have that   $[b,R_j]$ is bounded on $L^p(\mathcal G)$ for $1<p<\infty$.
Assume that $a$ is a $(1,p)$-atom which is supported in some ball $B$, then by \cite[Theorem 1.2]{DLLW}, we can see that $[b,R_j](a)$ makes sense and belongs to $ L^p(\mathcal G)$.

For any $g\in\mathcal G$, we can write
\begin{align}\label{decom}
[b,R_j](a)(g) 
&=\chi_{r_oB}(g)[b,R_j](a)(g)+\chi_{(r_oB)^c}(g)(b(g)-b_B)R_j(a)(g)\\
\nonumber&\quad -\chi_{(r_oB)^c}(g)\int_{\mathcal G}\left(K_j(g,g')-K_j(g,\tilde g)\right)\left(b(g')-b_B\right)a(g')dg'\\
\nonumber&\quad-\chi_{(r_oB)^c}(g)\int_{\mathcal G}K_j(g,\tilde g)\left(b(g')-b_B\right)a(g')dg'\\
\nonumber&=:I_1(g)+I_2(g)+I_3(g,\tilde g)+I_4(g,\tilde g),
\end{align}
where $\tilde g$ is any point in $B$.

For  $I_1$, by H\"older's inequality, \cite[Theorem 1.2]{DLLW} and the definition of atom, we have
\begin{align}\label{i1}
\|I_1\|_{L^1(\mathcal G)} 
&\leq |r_oB|^{1-{1\over p}}\left(\int_{r_oB}\big|[b,R_j](a)(g)dg\big|^pdg\right)^{1\over p}\\
\nonumber&\leq C|B|^{1-{1\over p}}\|a\|_{L^p(\mathcal G)}\leq C|B|^{1-{1\over p}}|B|^{{1\over p}-1}\nonumber\\
&=C,\nonumber
\end{align}
for any $1<p<\infty$.

By the method of choosing $r_o$ in Theorem \ref{thm0}, we can assume $r_o=2^{\gamma}$ for some $\gamma>1$ and $\gamma\in\mathbb N$.

For the term $I_2$, since $a$ has mean value zero, by \eqref{MEHT}, we have
\begin{align*}
\|I_2\|_{L^1(\mathcal G)}
&\leq \int_{(r_oB)^{c}}\big|b(g)-b_B\big|\left|\int_{B }\left(K_j(g,g')-K_j(g,\tilde g)\right)a(g')dg'\right|dg\\
&\leq\sum_{l=\gamma+1}^{\infty} \int_{(2^{l}B\setminus 2^{l-1}B)}\big|b(g)-b_B\big|
\bigg(\int_{B}\Big({\rho(g',\tilde g)\over \rho(g,g')^{Q+1}} \Big)^{p'} dg'\bigg)^{1\over p'} \|a\|_{L^p(\mathcal G)}dg\\
&\leq C \sum_{l=\gamma+1}^{\infty} 2^{-l}{1\over |2^l B|}\int_{2^{l}B}\big|b(g)-b_B\big|dg,
\end{align*}
where ${1\over p}+{1\over p'}=1$. By noting that $\big|b_{2^{i}B}-b_{2^{i-1}B}\big|\leq 2^Q\|b\|_{{\rm BMO}(\mathcal G)}$, we get that
%
%
%
\begin{align}\label{i2}
\|I_2\|_{L^1(\mathcal G)}
&\leq C \sum_{l=\gamma+1}^{\infty} 2^{-l} 2^Ql\|b\|_{{\rm BMO}(\mathcal G)}=C.
\end{align}

For the term $I_3$, by using \eqref{MEHT}, H\"older's inequality and the $L^p$ norm of the atom $a$,  we have
\begin{align} \label{i3}
\|I_3\|_{L^1(\mathcal G)} 
&\leq\sum_{l=\gamma+1}^{\infty} \int_{(2^{l}B\setminus 2^{l-1}B)}
\int_{B}{d(g',\tilde g)\over d(g,g')^{Q+1}}\big|b(g')-b_B\big|\left|a(g')\right|dg'dg \\
&\leq C \sum_{l=\gamma+1}^{\infty} 2^{-l}\int_{B}\big|b(g')-b_B\big|\left|a(g')\right|dg' \nonumber\\
&\leq C \sum_{l=\gamma+1}^{\infty} 2^{-l}\left({1\over |B|}\int_B\big|b(g')-b_B\big|^{p'}dg'\right)^{1\over p'}\nonumber\\
&\leq C \sum_{l=\gamma+1}^{\infty} 2^{-l}\|b\|_{{\rm BMO}(\mathcal G)}\nonumber\\
&\leq C,\nonumber
\end{align}
where the fourth inequality follows from the John--Nirenberg inequality for BMO space.

For the term $I_4$, by the mean value zero property of $a$, we have 
\begin{align}\label{i4}
\nonumber\|I_4\|_{L^1(\mathcal G)}&=\int_{(r_oB)^{c}}\left|\int_{B }K_j(g,\tilde g)
(b(g')-b_B)a(g')dg'\right|dg\\
&=\left(\int_{(r_oB)^c}\big| K_j(g,\tilde g)\big|dg\right)\left|\int_{B}b(g')a(g')dg'\right|.
\end{align}

From \eqref{decom}, \eqref{i1}, \eqref{i2} and \eqref{i3}, we can see that, for any $(1,p)$-atom $a$, 
$$\|[b,R_j](a)\|_{L^1(\mathcal G)}\leq C$$ if and only if $\|I_4\|_{L^1(\mathcal G)}\leq C$. Then Proposition  \ref{prop1} follows from \eqref{i4}.
\end{proof}

\begin{proposition}\label{prop2}
Suppose that $\mathcal G$ is a stratified Lie group, $b\in {\rm BMO}(\mathcal G)$  and $j\in\{1,2,\ldots,n\}$. Then 
the following statements are equivalent:
\begin{itemize}
\item[(i)] $[b,R_j]$ is bounded from $L_c^\infty(\mathcal G)$ to ${\rm BMO}(\mathcal G)$;
\item[(ii)]  $b$ satisfies the following condition:  For any ball $B$, any  $\tilde g\in B$ and  $f\in L_c^\infty(\mathcal G)$,
\begin{align}\label{lb}
\left({1\over |B|}\int_{B}\big|b(g)-b_B\big|dg\right)\left|\int_{(r_oB)^c}K_j (\tilde g, g')f(g')dg'\right|\leq C\|f\|_{L^{\infty}(\mathcal G)},
\end{align}
where $r_o$ is the one in Theorem \ref{thm0}.
 \end{itemize}
\end{proposition}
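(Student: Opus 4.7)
The plan is to mirror the strategy used in Proposition~\ref{prop1}: for any ball $B$ and any fixed reference point $\tilde g\in B$, I would decompose $[b,R_j](f)(g)-c_B$ for $g\in B$ into five pieces, show that four of them have $L^1(B)$-averages bounded by $C\|b\|_{\mathrm{BMO}(\mathcal G)}\|f\|_{L^\infty(\mathcal G)}$ by routine arguments, and observe that the average of the fifth piece is exactly the left-hand side of \eqref{lb}. Concretely, split $f=f_1+f_2$ with $f_1=f\chi_{r_oB}$ and $f_2=f\chi_{(r_oB)^c}$, use the identity $[b,R_j](f)=(b-b_B)R_jf-R_j((b-b_B)f)$, and add and subtract $R_jf_2(\tilde g)$ inside the $(b-b_B)R_jf_2$ summand as well as $K_j(\tilde g,g')$ inside $R_j((b-b_B)f_2)(g)$. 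Taking $c_B=-R_j((b-b_B)f_2)(\tilde g)$ yields
\[
[b,R_j](f)(g)-c_B=J_1(g)+J_2(g)+J_3(g)+J_4(g)-J_5(g),
\]
where $J_1(g)=(b(g)-b_B)R_jf_1(g)$, $J_2(g)=-R_j((b-b_B)f_1)(g)$, $J_3(g)=(b(g)-b_B)R_jf_2(\tilde g)$, $J_4(g)=(b(g)-b_B)[R_jf_2(g)-R_jf_2(\tilde g)]$, and $J_5(g)=\int_{(r_oB)^c}[K_j(g,g')-K_j(\tilde g,g')](b(g')-b_B)f(g')\,dg'$.

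I would then estimate each $J_i$ for $g\in B$. For $J_1$ and $J_2$: H\"older's inequality, the $L^p(\mathcal G)\to L^p(\mathcal G)$ boundedness of $R_j$ for some $1<p<\infty$, the bound $\|f_1\|_{L^p(\mathcal G)}\leq|r_oB|^{1/p}\|f\|_{L^\infty(\mathcal G)}$, and the John--Nirenberg inequality yield $\frac{1}{|B|}\int_B|J_i(g)|\,dg\lesssim\|b\|_{\mathrm{BMO}(\mathcal G)}\|f\|_{L^\infty(\mathcal G)}$ for $i=1,2$. For $J_4$ and $J_5$: the kernel smoothness from \eqref{MEHT} gives $|K_j(g,g')-K_j(\tilde g,g')|\lesssim \rho(g,\tilde g)/\rho(g,g')^{Q+1}$ for $g\in B$ and $g'\in(r_oB)^c$, and the annular decomposition $(r_oB)^c=\bigcup_{\ell\geq\gamma+1}(2^\ell B\setminus 2^{\ell-1}B)$ (with $r_o=2^\gamma$, as in the proof of Proposition~\ref{prop1}) produces geometrically decaying contributions; the extra logarithmic factor $|b_{2^\ell B}-b_B|\lesssim\ell\|b\|_{\mathrm{BMO}(\mathcal G)}$ occurring in $J_5$ is absorbed by $\sum_\ell\ell\,2^{-\ell}<\infty$, exactly as in the treatment of $I_2$ and $I_3$ in Proposition~\ref{prop1}. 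For $J_3$, the mean-zero property $\int_B(b-b_B)\,dg=0$ gives $(J_3)_B=0$ and
\[
\frac{1}{|B|}\int_B|J_3(g)|\,dg=\bigg(\frac{1}{|B|}\int_B|b(g)-b_B|\,dg\bigg)\bigg|\int_{(r_oB)^c}K_j(\tilde g,g')f(g')\,dg'\bigg|,
\]
which is precisely the left-hand side of \eqref{lb}.

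Both directions of the equivalence now follow quickly. For $(ii)\Rightarrow(i)$, summing the five $L^1(B)$-averages gives $\frac{1}{|B|}\int_B|[b,R_j](f)(g)-c_B|\,dg\lesssim\|f\|_{L^\infty(\mathcal G)}$, and taking the supremum over $B$ together with the equivalence of the $\|\cdot\|'_{\mathrm{BMO}(\mathcal G)}$ norm recalled just before Definition~\ref{def median} yields $\|[b,R_j]f\|_{\mathrm{BMO}(\mathcal G)}\lesssim\|f\|_{L^\infty(\mathcal G)}$. For $(i)\Rightarrow(ii)$, averaging the decomposition over $B$ and using $(J_3)_B=0$, I would rearrange to
\[
J_3(g)=\bigl([b,R_j](f)(g)-[b,R_j](f)_B\bigr)-\sum_{i\in\{1,2,4\}}\bigl(J_i(g)-(J_i)_B\bigr)+\bigl(J_5(g)-(J_5)_B\bigr),
\]
and then combine $\frac{1}{|B|}\int_B|J_i-(J_i)_B|\,dg\leq 2\cdot\frac{1}{|B|}\int_B|J_i(g)|\,dg\lesssim\|b\|_{\mathrm{BMO}(\mathcal G)}\|f\|_{L^\infty(\mathcal G)}$ for $i\in\{1,2,4,5\}$ with the assumed bound $\|[b,R_j]f\|_{\mathrm{BMO}(\mathcal G)}\lesssim\|f\|_{L^\infty(\mathcal G)}$ to obtain \eqref{lb}.

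The main obstacle I anticipate is purely organisational: finding the right constant $c_B$ that simultaneously isolates $J_3$ (which encodes all the information in condition \eqref{lb}) and reduces the remaining four pieces to standard annular BMO/smoothness estimates. Once the decomposition is in place, every individual bound is a routine adaptation of the estimates on $I_1$--$I_3$ already carried out in the proof of Proposition~\ref{prop1}.
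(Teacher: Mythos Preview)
Your proposal is correct and follows essentially the same approach as the paper: the same split $f=f_1+f_2$, the same five-term decomposition (your $J_1+J_2$ is the paper's $J_1=[b,R_j]f_1$, your $J_3,J_4,J_5$ are the paper's $J_4,J_2,J_3$), and the same identification of the key term $(b(g)-b_B)R_jf_2(\tilde g)$ whose $L^1(B)$-average is exactly \eqref{lb}. The only cosmetic differences are that the paper subtracts the actual mean $([b,R_j]f)_B$ rather than your chosen constant $c_B=-R_j((b-b_B)f_2)(\tilde g)$, and the paper handles the local part in one stroke by citing the $L^p$ boundedness of $[b,R_j]$ from \cite{DLLW} instead of splitting into your $J_1,J_2$ and invoking only the $L^p$ boundedness of $R_j$ plus John--Nirenberg.
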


\begin{proof}
Let $f$ be a bounded function with compact support, then $f\in L^p(\mathcal G)$. Since 
$b\in {\rm BMO}(\mathcal G)$, by \cite[Theorem 1.2]{DLLW} we can see that $[b,R_j]f\in L^p(\mathcal G)$ for any $1<p<\infty$. Thus, $[b,R_j]f$ is a locally integrable function.  Fix any ball $B\in\mathcal G$, we can write
$$f=f\chi_{r_o B}+f\chi_{(r_o B)^c}=:f_1+f_2,$$
 where $r_o$ is the constant in Theorem \ref{thm0}. Then for $g\in B$, and for 
%
%
any $\tilde g\in B$, we have
  \begin{align*}
& [b,R_j](f)(g)-\big([b,R_j]f\big)_B\\
&=[b,R_j](f_1)(g)-\big([b,R_j]f_1\big)_B+\big(b(g)-b_B\big)\big(R_j(f_2)(g)-R_j(f_2)(\tilde g)\big)\\
&\quad-{1\over |B|}\int_B  \left(b(g')-b_B\right)\left(R_j(f_2)(g')-R_j(f_2)(\tilde g)\right)dg'\\
&\quad +{1\over |B|}\int_B  \left(R_j\big((b-b_B)f_2\big)(g')-R_j\big((b-b_B)f_2\big)(g)\right)dg'\\
&\quad +\big(b(g)-b_B\big)R_j(f_2)(\tilde g).
 \end{align*}

 Set
 \begin{align*}
 J_1(g)&=[b,R_j](f_1)(g),\\
J_2(g,\tilde g)&=\big(b(g)-b_B\big)\big(R_j(f_2)(g)-R_j(f_2)(\tilde g)\big),\\
J_3(g',g)&=R_j\big((b-b_B)f_2\big)(g')-R_j\big((b-b_B)f_2\big)(g),\\
J_4(g,\tilde g)&=\big(b(g)-b_B\big)R_j(f_2)(\tilde g).
 \end{align*}
 Then we have
  \begin{align}\label{deco0}
[b,R_j](f)(g)-\big([b,R_j]f\big)_B&=J_1(g)-\big(J_1\big)_B+J_2(g,\tilde g)-\big(J_2(\cdot,\tilde g)\big)_B\\
 \nonumber &\quad+\left(J_3(\cdot, g)\right)_B+J_4(g,\tilde g).
 \end{align}
Therefore,
   \begin{align}\label{deco1}
 &{1\over |B|}\int_B\big|[b,R_j](f)(g)-\big([b,R_j]f\big)_B\big|dg\\
 \nonumber &\leq {2\over |B|}\int_B|J_1(g)|dg+{2\over |B|}\int_B\left|J_2(g,\tilde g)\right|dg
 +{1\over |B|}\int_B\left|\left(J_3(\cdot, g)\right)_B\right|dg\\
  \nonumber&\quad+{1\over |B|}\int_B\left|J_4(g,\tilde g)\right|dg\\
  \nonumber&=:2L_1+2L_2(\tilde g)+L_3+L_4(\tilde g).
 \end{align}


For the term $L_1$, since $b\in\operatorname{BMO}(\mathcal G)$, by H\"older's inequality and \cite[Theorem 1.2]{DLLW}, for any $1<p<\infty$, we have
\begin{align}\label{l1}
\nonumber L_1
&\leq\left({1\over |B|}\int_B\big| [b,R_j](f_1)(g)\big|^pdg\right)^{1\over p}\\
&\leq C|B|^{-{1\over p}}\|b\|_{{\rm BMO}(\mathcal G)}\left(\int_{r_oB}|f_1(g)|^pdg\right)^{1\over p}\\
\nonumber&\leq C(r_o)\|b\|_{{\rm BMO}(\mathcal G)}\|f\|_{L^\infty(\mathcal G)}.
\end{align}

For any $\tilde g\in B$, we have
\begin{align*}
L_2(\tilde g)\leq {1\over |B|}\int_B \big|b(g)-b_B\big|\big|R_j(f_2)(g)-R_j(f_2)(\tilde g)\big|dg.
\end{align*}
By \eqref{MEHT}, we can obtain
\begin{align*}
\big|R_j(f_2)(g)-R_j(f_2)(\tilde g)\big|
&\leq\|f\|_{L^\infty(\mathcal G)}\sum_{l=\gamma+1}^{\infty}\int_{2^lB\setminus 2^{l-1}B}
\left|K_j(g,g')-K_j(\tilde g, g')\right|dg'\\
&\leq C\|f\|_{L^\infty(\mathcal G)}\sum_{l=\gamma+1}^{\infty}2^{-l}{1\over |2^lB|}\int_{2^lB}dg'\\
&\leq C\|f\|_{L^\infty(\mathcal G)}.
\end{align*}
Therefore, for any $\tilde g\in B$,
\begin{align}\label{l2}
L_2(\tilde g)\leq C\|f\|_{L^\infty(\mathcal G)}
{1\over |B|}\int_B \big|b(g)-b_B\big|dg\leq C\|b\|_{{\rm BMO}(\mathcal G)}\|f\|_{L^\infty(\mathcal G)}.
\end{align}

For the term $L_3$, we use \eqref{MEHT} again, then for any $g,g'\in B$, we have
\begin{align*}
\big|J_3(g',g)\big|
&\leq C\|f\|_{L^\infty(\mathcal G)}\sum_{l=\gamma+1}^{\infty}2^{-l}{1\over |2^lB|}\int_{2^lB}\left|b(g_1)-b_B\right|dg_1\\
&\leq C\|b\|_{{\rm BMO}(\mathcal G)}\|f\|_{L^\infty(\mathcal G)}.
\end{align*}
Thus we have
\begin{align}\label{l3}
L_3\leq C\|b\|_{{\rm BMO}(\mathcal G)}\|f\|_{L^\infty(\mathcal G)}.
\end{align}

From \eqref{deco0}, we have 
\begin{align*}
 \left|J_4(g,\tilde g)\right| &\leq \left|[b,R_j](f)(g)-\big([b,R_j]f\big)_B\right|+|J_1(g)|+\left|\big(J_1\big)_B\right|+\left|J_2(g,\tilde g)\right|\\
&\quad+\left|\big(J_2(\cdot,\tilde g)\big)_B\right|+\left|\left(J_3(\cdot, g)\right)_B\right|,
 \end{align*}
which means
 \begin{align}\label{deco2}
L_4(\tilde g)\leq {1\over |B|}\int_B\big|[b,R_j](f)(g)-\big([b,R_j]f\big)_B\big|dg+2L_1+2L_2(\tilde g)+L_3.
 \end{align}

By \eqref{deco1}, \eqref{deco2} and \eqref{l1}-\eqref{l3}, we can see that $[b,R_j]$ is bounded from $L_c^\infty(\mathcal G)$ to ${\rm BMO}(\mathcal G)$ if and only if $L_4(\tilde g)\leq C\|f\|_{L^{\infty}(\mathcal G)}$ for any $\tilde g\in B$, i.e.,
\begin{align*}
C\|f\|_{L^{\infty}(\mathcal G)}&\geq{1\over |B|}\int_B\left|J_4(g,\tilde g)\right|dg\\
&={1\over |B|}\int_B\left|\big(b(g)-b_B\big)R_j(f_2)(\tilde g)\right|dg\\
&=\left({1\over |B|}\int_{B}\big|b(g)-b_B\big|dg\right)\left|\int_{(r_oB)^c}K_j\big (\tilde g, g'\big)f\big(g'\big)dg'\right|.
\end{align*}
This proves the proposition.
\end{proof}

}

\begin{proof}[Proof of Theorem \ref{thm2}]
From Proposition \ref{prop1} and Proposition \ref{prop2}, we can see that it is suffices to show both \eqref{h1b} and \eqref{lb} are equivalent to the condition (iii).

For the equivalence of \eqref{h1b} and (iii). If $b$ equals  a constant almost everywhere, then
 for any atom $a$ supported in some ball $B=B(g_0,r)$ and for any $\tilde g\in B$,
\begin{align*}
\left(\int_{(r_oB)^c}\left| K_j(g,\tilde g)\right|dg\right)\left|\int_{B}b\big(g'\big)a\big(g'\big)dg'\right|
\leq C\left(\int_{(r_oB)^c}\left| K_j(g,\tilde g)\right|dg\right)\left|\int_{B}a\big(g'\big)dg'\right|=0,
\end{align*}
due to the mean value zero property of atom.

Conversely, assume that \eqref{h1b} holds.  Let $G$ be the set in Corollary \ref{cor1}, then $\inf\limits_{g'\in G}\rho(g_0,g')=r_or $, $|G|=\infty$ and  for $g\in G$, $\tilde g\in B$, we have
$$ |K_j(g,\tilde g)|\geq  C \rho(g,g_0)^{-Q}.$$
Therefore
\begin{align*}
C&\geq \left(\int_{(r_oB)^c}\left| K_j(g,\tilde g)\right|dg\right)\left|\int_{B}b\big(g'\big)a\big(g'\big)dg'\right|dg\\
&\geq  C\left(\int_{ G}\rho(g,g_0)^{-Q}dg\right)\left|\int_{B}b\big(g'\big)a\big(g'\big)dg'\right|,
\end{align*}
where $a$ is any $(1,p)$-atom supported in $B$ with $1<p<\infty$. This  is impossible unless 
\begin{align}\label{ba=0}
\int_{B}b(g')a(g')dg'=0
\end{align} 
for every ball $B$ and any $(1,p)$-atom supported in $B$ with $1<p<\infty$. 
We recall a result from \cite{MSV}: one can define the space $H^1_{\rm fin}(\mathcal G)$ as the set of
all finite linear combinations of $(1,p)$-atoms, which is endowed with the natural norm
$$ \|f\|_{ H^1_{\rm fin}(\mathcal G) }=\inf\Big\{ \Big( \sum_{j=1}^N |\lambda_j|^p \Big)^{1\over p}:\ f=\sum_{j=1}^N \lambda_ja_j, \  a_j  \ (1,p)-{\rm atoms}, N\in\mathbb N \Big\}.$$  
Note that $H^1_{\rm fin}(\mathcal G)$ is dense in $H^1(\mathcal G)$. Moreover, from \cite[Proposition 2]{MSV}, we know that the two norms $\|\cdot \|_{H^1_{\rm fin}(\mathcal G)}$ and $\|\cdot \|_{H^1(\mathcal G)}$ are equivalent on $H^1_{\rm fin}(\mathcal G)$. Hence, if \eqref{ba=0} holds for every $(1,p)$-atom, then we obtain that
$b$ is a zero linear functional on $H^1_{\rm fin}(\mathcal G)$, and hence extends to a zero linear functional on $H^1(\mathcal G)$. This shows that $b$ is in ${\rm BMO}(\mathcal G)$ with 
$$ \|b\|_{{\rm BMO}(\mathcal G)}=0. $$
Thus, $b$ equals  a constant almost everywhere.

For the equivalence of \eqref{lb} and (iii). It is easy to see that if $b$ equals a constant almost everywhere, then \eqref{lb} holds. Conversely, take $f_N(g)=\chi_{G\cap B(g_0,N)}(g)$, $N\in\mathbb N$, in \eqref{lb} to obtain
\begin{align*}
C&\geq\left({1\over |B|}\int_{B}\big|b(g)-b_B\big|dg\right)\left|\int_{G\cap B(g_0,N)}K_j\big (\tilde g, g'\big)dg'\right|\\
&=\left({1\over |B|}\int_{B}\big|b(g)-b_B\big|dg\right)\int_{G\cap B(g_0,N)}\left|K_j\big (\tilde g, g'\big)\right|dg'\\
&\geq C_1\left({1\over |B|}\int_{B}\big|b(g)-b_B\big|dg\right)\int_{G\cap B(g_0,N)}\rho(g',g_0)^{-Q}dg'\\
&=C_2 \log N\left({1\over |B|}\int_{B}\big|b(g)-b_B\big|dg\right),
\end{align*}
for all $N\in\mathbb N$ large enough. Letting $N$ go to infinity we have $b(g)=b_B$ a.e. in $B$, and hence $ b$ must be constant almost everywhere.
\end{proof}

\section{Endpoint characterisation of commutator $[b,R_j]$ on Heisenberg groups $\mathbb H^n$ and proof of Theorem \ref{thm3}}
\setcounter{equation}{0}

\begin{proof}[Proof of Theorem \ref{thm H lower bound}]
{We handle the Riesz transform kernel by using the idea in the proof of \cite[Proposition 3.1]{LHQ}.

{Recall that (see for example \cite{Hu} and \cite{Ga}) the explicit expression of heat kernel on the Heisenberg group $\mathbb H^n$ is
as follows: for $g=(z,t)\in\H^n$,
$$p_h(g)={1\over 2(4\pi h)^{n+1}}\int_{\mathbb R}\exp\Big({\lambda\over 4h}\big(it-\|z\|^2\coth\lambda\big)\Big)\Big({\lambda\over\sinh\lambda}\Big)^nd\lambda,$$
where  $\|z\|=\sum_{j=1}^n\|z_j\|^2$.}

For any $g=(z,t)\in\mathbb H^n$},  by using the explicit expression of the heat kernel above and by Fubini's theorem, we have
\begin{align*}
(-\Delta_{\mathbb H^n})^{-{1\over 2}}(g)
&=C\int_{0}^{+\infty}h^{-{1\over 2}}p_h(g)dh\\
&=C'\int_{\mathbb R}\,\int_{0}^{+\infty}h^{-n-{3\over 2}}\exp\Big({\lambda\over 4h}\big(it-\|z\|^2\coth\lambda\big)\Big)dh\ \Big({\lambda\over\sinh\lambda}\Big)^nd\lambda\\
&=C^{''}\int_{\mathbb R}\big(\|z\|^2\lambda\coth\lambda-i\lambda t\big)^{-n-{1\over 2}}\Big({\lambda\over\sinh\lambda}\Big)^nd\lambda.
\end{align*}
Then by \eqref{XYT}, for $j=1,\cdots, n$, we can obtain
\begin{align*}
{X_j (-\Delta_{\mathbb H^n})^{-\frac{1}{2}}(g)}&=C\Big({\partial\over\partial x_j}+2y_j{\partial\over\partial t}\Big)\int_{\mathbb R}\big(\|z\|^2\lambda\coth\lambda-i\lambda t\big)^{-n-{1\over 2}}\Big({\lambda\over\sinh\lambda}\Big)^nd\lambda\\
&=C(-2n-1)\bigg[x_j\int_{\mathbb R}\big(\|z\|^2\lambda\coth\lambda-i\lambda t\big)^{-n-{3\over 2}}\Big({\lambda\over\sinh\lambda}\Big)^{n+1}\cosh\lambda d\lambda\\
&\hskip4cm -iy_j\int_{\mathbb R}\big(\|z\|^2\lambda\coth\lambda-i\lambda t\big)^{-n-{3\over 2}}\Big({\lambda\over\sinh\lambda}\Big)^{n}\lambda d\lambda\bigg].
\end{align*}

Observe that
\begin{align*}
\|z\|^2\lambda\coth\lambda-i\lambda t 
&={\lambda\over \sinh\lambda}d_K^2(g)\bigg({\|z\|^2\over d_K^2(g)}\cosh\lambda-i{t\over d_K^2(g)}\sinh\lambda\bigg)\\
&={\lambda\over \sinh\lambda}d_K^2(g)\cosh(\lambda-i\phi),
\end{align*}
where
$$-{\pi\over2}\leq\phi=\phi(\|z\|,t)\leq {\pi\over 2},\quad e^{i\phi}=d_K^{-2}(g)(\|z\|^2+i\,t),$$
and $d_K(g)$ is the Kor\'anyi norm as defined in \eqref{dk}.
Therefore,
\begin{align*}
{X_j (-\Delta_{\mathbb H^n})^{-\frac{1}{2}}(g)}&
=Cd_K^{-Q-1}(g)\bigg[x_j\int_{\mathbb R}\Big({\lambda\over\sinh\lambda}\Big)^{-{1\over 2}}\cosh\lambda \big(\cosh(\lambda-i\phi)\big)^{-n-{3\over 2}}d\lambda\\
&\hskip4cm -iy_j\int_{\mathbb R}\Big({\lambda\over\sinh\lambda}\Big)^{-{3\over 2}}\lambda \big(\cosh(\lambda-i\phi)\big)^{-n-{3\over 2}}d\lambda\bigg].
\end{align*}
Then by the Cauchy integral theorem, we have
\begin{align*}
{X_j (-\Delta_{\mathbb H^n})^{-\frac{1}{2}}(g)}=Cd_K^{-Q-1}(g)F_j(g),
\end{align*}
where
\begin{align*}
F_j(g)&=x_j\int_{\mathbb R}\Big[{\sinh(\lambda+i\phi)\over \lambda+i\phi}\Big]^{1\over 2}\cosh(\lambda+i\phi)(\cosh \lambda)^{-n-{3\over 2}}d\lambda\\
&\quad-iy_j\int_{\mathbb R}(\lambda+i\phi)\Big[{\sinh(\lambda+i\phi)\over \lambda+i\phi}\Big]^{3\over 2}(\cosh \lambda)^{-n-{3\over 2}}d\lambda,
\end{align*}

Similarly,
\begin{align*}
{X_{n+j}(-\Delta_{\mathbb H^n})^{-\frac{1}{2}}(g)}=Cd_K^{-Q-1}(g)H_j(g),
\end{align*}
where
\begin{align*}
H_j(g)&=y_j\int_{\mathbb R}\Big[{\sinh(\lambda+i\phi)\over \lambda+i\phi}\Big]^{1\over 2}\cosh(\lambda+i\phi)(\cosh \lambda)^{-n-{3\over 2}}d\lambda\\
&\quad+ix_j\int_{\mathbb R}(\lambda+i\phi)\Big[{\sinh(\lambda+i\phi)\over \lambda+i\phi}\Big]^{3\over 2}(\cosh \lambda)^{-n-{3\over 2}}d\lambda.
\end{align*}

Let
\begin{align*}
A_n(w)&=\int_{\mathbb R}\Big[{\sinh(\lambda+w)\over \lambda+w}\Big]^{1\over 2}\cosh(\lambda+w)(\cosh \lambda)^{-n-{3\over 2}}d\lambda,\quad w\in\mathbb C,\\
B_n(w)&=\int_{\mathbb R}(\lambda+w)\Big[{\sinh(\lambda+w)\over \lambda+w}\Big]^{3\over 2}(\cosh \lambda)^{-n-{3\over 2}}d\lambda\quad w\in\mathbb C.
\end{align*}
Then 
\begin{align*}
F_j(g)=x_jA_n(i\phi)-iy_jB_n(i\phi),\quad H_j(g)=y_jA_n(i\phi)+ix_jB_n(i\phi).
\end{align*}

{Notice that} $A_n(w)$ and $B_n(w)$ are analytic in some domain on $\mathbb C$, which contains the segment $[-{\pi i\over 2}, {\pi i\over 2}]$ of the imaginary axis, and $A_n(0)\neq 0$, $B_n(0)=0$. Thus, 
$A_n(i\phi)$ has at most a finite number of zero points on $[-{\pi \over 2}, {\pi \over 2}]$, i.e., there exist $\{\phi_\ell\}_{\ell=1}^N\subset [-{\pi \over 2}, {\pi \over 2}]$ such that $A_n(i\phi_\ell)=0$. By noting that $\phi=\phi(\|z\|^2,t)$, we see that
$\{\phi_\ell\}_{\ell=1}^N$ corresponds to a set $\mathcal H_N$ in $\H^n$ with
$$ \mathcal H_N:=\{ (z,t)\in\H^n: \phi_\ell=\phi(\|z\|^2,t), \ell=1,\ldots,N\}, $$
 which has measure zero.

Therefore,  for any fixed $\phi \in [-{\pi \over 2}, {\pi \over 2}]\backslash \{\phi_\ell\}_{\ell=1}^N$, when we fixed $|z_j|^2=x_j^2+y_j^2$ with $x_j\cdot y_j\not=0$, there are at most two $z_j$ satisfying $F_j(g)=0$ (or $H_j(g)=0$). Consequently, the measure of the set of $g$ satisfying $F_j(g)=0$ (or $H_j(g)=0$) is zero.  This completes the proof.
\end{proof}

\begin{proof}[Proof of Theorem \ref{thm3}]
We first prove  the sufficient part. Suppose $j\in\{1,\ldots,2n\}$ and $b\in L^\infty(\mathbb H^n)$ with $\|b\|_{L^\infty(\mathbb H^n)}\not=0$.
For $f\in L^1(\mathbb H^n)$, and for any $\lambda>0$, we have
\begin{align*}
&|\{ g\in\mathbb H^n: |[b,R_j](f)(g)|>\lambda \}|\\
&\leq|\{ g\in\mathbb H^n: \big|b(g)R_j(f)(g) \big|>\lambda/2 \}|+|\{ g\in\mathbb H^n: \big|  R_j(bf)(g) \big|>\lambda/2 \}|\\
&\leq  C\|b\|_{L^\infty(\mathbb H^n)}{   \|f\|_{L^1(\mathbb H^n)} \over\lambda },
\end{align*}
which shows that $[b,R_j]$ is of weak type $(1,1)$, where the last inequality follows from the fact that
$R_j$ is of weak type $(1,1)$.

For the necessity part. Suppose that $b\in L^1_{loc}(\mathbb H^n)$, then $b$ is finite almost everywhere and almost every point is a Lebesgue point of $b$. 

Let $f={1\over |B(0,1)|}\chi_{B(0,1)}$.
For  every $\epsilon>0$, set $f_\epsilon (g)={1\over \epsilon^Q}f(\delta_{\epsilon^{-1}}(g))$ and $f_\epsilon ^{g'}(g)=f_\epsilon (g'^{-1}\circ g)$. Fix any Lebesgue point $g'$ of $b$, since $K_j\in C^\infty(\mathbb H^n\setminus \{0\})$, for any $g\neq g'$, we have
\begin{align*}
\lim_{\epsilon\rightarrow 0}\left| [b,R_j] \big(f_\epsilon ^{g'}\big)(g) \right|
&=\lim_{\epsilon\rightarrow 0}\left| {\rm p.v.} \int_{\mathbb H^n} K_j(g,\tilde g) \big(b(g)-b(\tilde g)\big)f_\epsilon (g'^{-1}\circ \tilde g)d\tilde g \right|\\
&=\lim_{\epsilon\rightarrow 0}{1\over |B(g',\epsilon)|} \left| {\rm p.v.} \int_{B(g',\epsilon)} K_j(g,\tilde g) \big(b(g)-b(\tilde g)\big)d\tilde g \right|\\
&=\left|K_j(g, g') \right|    \left|b(g)-b( g')\right|.
\end{align*}

\color{black}
Thus,
\begin{align}\label{kb est}
\left|\left\{g\in\mathbb H^n\setminus\{g'\} :\left|K_j(g, g') \right|    \left|b(g)-b( g')\right|>\lambda\right\}\right|
\leq {\|[b,R_j]\|_{L^1(\mathbb H^n)\rightarrow L^{1,\infty}(\mathbb H^n)}\over {\lambda}}.
\end{align}

By Theorem \ref{thm H lower bound}, we can see that $K_j(g)\neq 0$ almost everywhere on $S(0,1)$.
Fix small $\varepsilon>0$ and take $\Gamma$ to be a compact subset of $S(0,1)$ such that $K_j(g)\neq 0$ on $\Gamma$ and $\sigma(S(0,1)\setminus \Gamma)<\varepsilon$, where $\sigma$ is the Radon measure on $S(0,1)$. Let $C_{K_j}=\inf\{|K_j(g)|: g\in \Gamma\}$, since $K_j\in C^\infty(\mathbb H^n\setminus\{0\})$, $j=1,\cdots, 2n$, we have $C_{K_j}>0$.

Set
\begin{align*}
S_\Gamma(g')&=\left\{g\in\mathbb H^n: \delta_{d_K(g,g')^{-1}}(g'^{-1}\circ g)\in \Gamma\right\},\\
\Lambda_{\lambda}(g')&=\bigg\{g\in\mathbb H^n: \delta_{d_K(g,g')^{-1}}(g'^{-1}\circ g)\in \Gamma, {|b(g)-b(g')|\over d_K(g,g')^Q}>\lambda\bigg\}.
\end{align*}
Then for any $r>0$, we have
\begin{align}\label{gamma mea}
 \big|B(0,r)\setminus S_\Gamma(0)\big|<\varepsilon {r^Q\over Q}.
\end{align}

By the homogeneous property of $K_j$ \eqref{kjs} and \eqref{kb est}, we have
\begin{align*}
\left|\Lambda_\lambda(g')\right|
&\leq\bigg|\bigg\{g\in\mathbb H^n: \delta_{d_K(g,g')^{-1}}(g'^{-1}\circ g)\in \Gamma, |b(g)-b(g')| |K_j(g,g')|>C_{K_j}\lambda\bigg\}\bigg|\\
&\leq{1\over {C_{K_j}\lambda}}\big\|[b,R_j]\big\|_{L^1(\mathbb H^n)\rightarrow L^{1,\infty}(\mathbb H^n)}.
\end{align*}

Since  $[b-c, R_j]=[b, R_j]$, $j=1,\cdots, 2n$,  for any constant $c$ and $b$ is finite almost everywhere, we may assume $b(0)=0$, then we have
\begin{align}\label{lam0}
\left|\Lambda_\lambda(0)\right|
&=\left|\left\{g\in\mathbb H^n: \delta_{d_K(g)^{-1}}( g)\in \Gamma, {|b(g)|\over d_K(g)^Q}>\lambda\right\}\right|\\
\nonumber&\leq{1\over {C_{K_j}\lambda}}\|[b,R_j]\|_{L^1(\mathbb H^n)\rightarrow L^{1,\infty}(\mathbb H^n)}.
\end{align}

Let $g'\neq 0$, $g\in B(g', {1\over 2}d_K(g')|b(g')|^{1/Q})\cap S_\Gamma(g')$ and $g\notin\Lambda_{d_K(g')^{-Q}}(g')$, then
\begin{align*}
|b(g)|&\geq \left|b(g')\right|-{|b(g)-b(g')|\over d_K(g,g')^Q}d_K(g,g')^Q
\geq \Big(1-{1\over 2^Q}\Big)\left|b(g')\right|
\end{align*}
for almost every $g'\in\mathbb H^n$.
 Then by \eqref{lam0}, we have
\begin{align}\label{less}
\nonumber I_{g',\Gamma}:&=\bigg|\bigg\{g\in B\Big(g', {1\over 2}d_K(g')|b(g')|^{1\over Q}\Big)\cap S_\Gamma(g')\cap S_\Gamma(0)\setminus\Lambda_{d_K(g')^{-Q}}(g'): \\
\nonumber&\qquad {|b(g')|\over d_K(g)^Q}>{1\over C_{d_K}^Qd_K(g')^Q}\bigg\}\bigg|\\
&\leq\bigg|\bigg\{g\in S_\Gamma(0):  {|b(g)|\over d_K(g)^Q}>{1-2^{-Q} \over C_{d_K}^Qd_K(g')^Q}\bigg\}\bigg|\\
\nonumber&\leq{C_{d_K}^Qd_K(g')^Q\over {C_{K_j}(1-2^{-Q})}}\big\|[b,R_j]\big\|_{L^1(\mathbb H^n)\rightarrow L^{1,\infty}(\mathbb H^n)},
\end{align}
where $C_{d_K}$ is the constant in \eqref{qdr1}.

Suppose that $|b(g')|\geq 2^Q$, then for any $g\in B(g', {1\over 2}d_K(g')\left|b(g')\right|^{1\over Q})$, by \eqref{qdr}, 
\begin{align*}
d_K(g)\leq C_{d_K}\!\!\left(d_K(g,g')+d_K(g',0)\right)\leq C_{d_K} \Big({1\over 2}d_K(g')\left|b(g')\right|^{1\over Q}+d_K(g')\Big)
\leq  C_{d_K}d_K(g')\left|b(g')\right|^{1\over Q}.
\end{align*}
That is,
$$B\big(g', {1\over 2}d_K(g')\left|b(g')\right|^{1\over Q}\big)\subset B\big(0, C_{d_K}d_K(g')\left|b(g')\right|^{1\over Q}\big).$$
Therefore,
\begin{align*}
 I_{g',\Gamma}\!&\geq\!\Big|B\Big(g', {1\over 2}d_K(g')|b(g')|^{1\over Q}\Big)\cap S_\Gamma(g')\Big|\!-\!\Big|
B\big(0, C_{d_K}d_K(g')\left|b(g')\right|^{1\over Q}\big)\!\setminus\! S_\Gamma(0)\Big|\!-\!\big|\Lambda_{d_K(g')^{-Q}}(g')\big|.
\end{align*}

Observe that, by \eqref{gamma mea}, we have
\begin{align*}
&\Big|B\Big(g', {1\over 2}d_K(g')|b(g')|^{1\over Q}\Big)\cap S_\Gamma(g')\Big|\\
&=\Big|B\Big(0, {1\over 2}d_K(g')|b(g')|^{1\over Q}\Big)\cap S_\Gamma(0)\Big|\\
&=\Big|B\Big(0, {1\over 2}d_K(g')|b(g')|^{1\over Q}\Big)\Big|- \Big|B\Big(0, {1\over 2}d_K(g')|b(g')|^{1\over Q}\Big)\cap S_\Gamma(0)^{c}\Big|\\
&> {1\over Q 2^Q}\big(\omega_Q-\varepsilon\big)d_K(g')^Q|b(g')|,
\end{align*}
where $\omega_Q$ is the Radon measure of $S(0,1)$,
and
\begin{align*}
&\Big|
B\big(0, C_{d_K}d_K(g')\left|b(g')\right|^{1\over Q}\big)\setminus S_\Gamma(0)\Big|+\left|\Lambda_{d_K(g')^{-Q}}(g')\right|\\
&
<{\varepsilon\over Q}C_{d_K}^Qd_K(g')^Q|b(g')|+{1\over {C_{K_j}}}d_K(g')^Q \|[b,R_j]\|_{L^1(\mathbb H^n)\rightarrow L^{1,\infty}(\mathbb H^n)}.
\end{align*}
Consequently, we obtain
$$ I_{g',\Gamma}>{1\over Q}d_K(g')^Q|b(g')|\Big({\omega_Q\over 2^Q}-{1+C_{d_K}^Q2^Q\over 2^Q}\varepsilon\Big)-{1\over {C_{K_j}}}d_K(g')^Q\|[b,R_j]\|_{L^1(\mathbb H^n)\rightarrow L^{1,\infty}(\mathbb H^n)}.$$

Now take $\varepsilon={\omega_Q\over 2(1+C_{d_K}^Q2^Q)}$, we have
\begin{align}\label{more}
 I_{g',\Gamma}>{\omega_Q\over 2^{Q+1}Q}d_K(g')^Q|b(g')|-{1\over {C_{K_j}}}d_K(g')^Q\|[b,R_j]\|_{L^1(\mathbb H^n)\rightarrow L^{1,\infty}(\mathbb H^n)}.
\end{align}

Now
combining the inequalities \eqref{less} and \eqref{more}, we obtain that
$$\left|b(g')\right|<{2^{Q+1}Q\over C_{K_j}\omega_Q}\Big(1+{2^QC_{d_K}^Q\over 2^Q-1}\Big)
\|[b,R_j]\|_{L^1(\mathbb H^n)\rightarrow L^{1,\infty}(\mathbb H^n)}.$$

To sum up, for almost all $g'\in\mathbb H^n$,
$$\left|b(g')\right|\leq\max\bigg\{2^Q, {2^{Q+1}Q\over C_{K_j}\omega_Q}\Big(1+{2^QC_{d_K}^Q\over 2^Q-1}\Big)\|[b,R_j]\|_{L^1(\mathbb H^n)\rightarrow L^{1,\infty}(\mathbb H^n)}\bigg\}.$$
This completes the proof of Theorem \ref{thm3}.
\end{proof}

\bigskip
{\bf Acknowledgement:} X. T. Duong and J. Li are supported by ARC DP 160100153. H.-Q. Li
is partially supported by NSF of China (Grants No. 11625102 and No. 11571077) and ``The
Program for Professor of Special Appointment (Eastern Scholar) at Shanghai Institutions
of Higher Learning''.
 B. D. Wick's research is partially supported by National Science Foundation -- DMS \# 1560955.
 Q. Y. Wu is supported by NSF of China (Grants No. 11671185 and No. 11701250) and the State Scholarship Fund of China (No. 201708370017).

\bibliographystyle{amsplain}

\end{document}